\newtheorem{theor}{Theorem}
\newtheorem{lemma}[theor]{Lemma}
\newtheorem{prop}[theor]{Proposition}
\theoremstyle{definition}
\newtheorem{defn}{Definition}
\newtheorem{rem}{Remark}
\newcommand*{\abs}[1]{\lvert#1\rvert}
\newcommand*{\pd}[2]{\frac{\partial#1}{\partial#2}}
\def\cprime{\/{\mathsurround=0pt$'$}}
\newcommand{\ldb}{[\![}
\newcommand{\rdb}{]\!]}
\DeclareMathOperator{\sym}{sym} \DeclareMathOperator{\Hom}{Hom}
\DeclareMathOperator{\CDiff}{\mathcal{C}Diff}
\newcommand{\CDiffskad}{\CDiff^{\,\text{\textup{sk-ad}}}}
\newcommand{\CDiffskew}{\CDiff^{\,\text{\textup{skew}}}}
\renewcommand{\kappa}{\varkappa}
\renewcommand{\phi}{\varphi}
\DeclareFontFamily{OML}{cyi}{} \DeclareFontShape{OML}{cyi}{m}{n}{
  <5> <6> <7> <8> <9> gen * wncyi
  <10> <10.95> <12> <14.4> <17.28> <20.74> <24.88> wncyi10
 }{}
\DeclareSymbolFont{rusletters}{OML}{cyi}{m}{n}
\DeclareSymbolFontAlphabet{\rusmath}{rusletters}
\DeclareMathSymbol\re{\rusmath}{rusletters}{"03}
\begin{document}

\title{Variational Poisson--Nijenhuis structures for partial differential
  equations} 

\author{V.~Golovko}
\address{Valentina Golovko\\
  Lomonosov MSU\\
  Faculty of Physics, Department of Mathematics,
  Vorob'evy Hills, Moscow 119902 Russia.}
\email{golovko@mccme.ru}

\author{I.~Krasil{\cprime}shchik}
\address{Iosif Krasil{\cprime}shchik \\
  Independent University of Moscow \\
  B. Vlasevsky~11 \\
  119002 Moscow \\
  Russia}

\email{josephk@diffiety.ac.ru}

\author{A.~Verbovetsky}
\address{Alexander Verbovetsky \\
  Independent University of Moscow \\
  B. Vlasevsky~11 \\
  119002 Moscow \\
  Russia}

\email{verbovet@mccme.ru}

\keywords{Poisson--Nijenhuis structures, symmetries,  conservation law,
  coverings, nonlocal structures.}

\subjclass[2000]{37K05, 35Q53}

\thanks{This work was supported in part by the NWO--RFBR grant 047.017.015 and
  RFBR--Consortium E.I.N.S.T.E.I.N grant 06-01-92060.}

\begin{abstract}
  We explore variational Poisson--Nijenhuis structures on nonlinear PDEs and
  establish relations between Schouten and Nijenhuis brackets on the initial
  equation with the Lie bracket of symmetries on its natural extensions
  (coverings). This approach allows to construct a framework for the theory of
  nonlocal structures.
\end{abstract}

\maketitle


\section*{Introduction}
\label{sec:introduction}

Poisson--Nijenhuis structures~\cite{V2} play an important role both in
classical differential geometry (see, for example~\cite{B-M, KS}) and in
geometry of partial differential equations, see~\cite{KS-Ma, Ma-Mo}. In the
latter case existence of a Poisson--Nijenhuis structure virtually amounts to
complete integrability of the equation under consideration.

Infinite-dimensional Poisson--Nijenhuis structures are well described for the
case of jets and for evolutionary differential equations regarded as flows on
the jet space. As for general differential equation, the corresponding theory
was not introduced for a long time. In our relatively recent works~\cite{Ham,
  Bussinesq} we outlined an approach to the theory in application to evolution
equation in geometrical setting. This approach is based on the notion of
$\Delta$-coverings and reduces construction of both recursion operators and
Hamiltonian structures to solution of the linearised equation
\begin{equation}\label{eq:23}
  \ell_{\mathcal{E}}(\Phi)=0
\end{equation}
on special extensions of the initial equation~$\mathcal{E}$. We call these
extensions the \emph{$\ell$-} and \emph{$\ell^*$-coverings} and they play the
role of tangent and cotangent bundles in the category of differential
equations.

The above mentioned approach seems to work for general equations as well and
we expose its generalisation below.

In Section~\ref{sec:preliminaries} we define Poisson--Nijenhuis structures in
the ``absolute case'', i.e., for the manifold of infinite jets. To this end,
we redefine the Schouten and Fr\"{o}licher--Nijenhuis brackets
(cf.~with~\cite{Ham, C-cohom}, see also~\cite{Vin-book}). We also express the
compatibility condition between a Poisson bi-vector and a Nijenhuis operator
in terms of a special bracket closely related to Vinogradov's \emph{unified
  bracket}, see~\cite{Vin}. The main result of this section is
Theorem~\ref{theor:main} that states the existence of infinite families of
pair-wise compatible Hamiltonian structures related to the initial
Poisson--Nijenhuis structure. In Section~\ref{sec:vari-poiss-nijenh}
Poisson--Nijenhuis structures on evolution equations are introduced. We show
that an invariant (with respect to the flow determined by the equation)
Nijenhuis tensors are \emph{recursion operators} for the symmetries, while
invariant Poisson bi-vectors amount to Hamiltonian structures. We also define
the $\ell$- and $\ell^*$-coverings and reduce construction of recursion
operators and Hamiltonian structures to solution of
equation~\eqref{eq:23}. The Schouten and Fr\"{o}licher--Nijenhuis brackets as
well as the compatibility conditions are reformulated in terms of the Jacobi
brackets of the corresponding solutions and explicit formulas for these
brackets are obtained. Section~\ref{sec:gen-str} generalises the results to
arbitrary nonlinear partial differential equation. Finally, in
Section~\ref{sec:nonloc-str} we outline an approach to deal with
\emph{nonlocal} Poisson--Nijenhuis structures.


\section{Variational Poisson--Nijenhuis structures on $J^\infty(\pi)$}
\label{sec:preliminaries}

\subsection{Geometrical structures}

Let us recall definitions and results we shall use. For details we refer
to~\cite{ClassSymEng}.

Let $\pi\colon E\rightarrow M$ be a vector bundle over an $n$-dimensional
manifold $M$ and $\pi_\infty\colon J^\infty(\pi)\rightarrow M$ be the infinite
jet bundle of local sections of the bundle~$\pi$. If $x_1,\dots,x_n$ are local
coordinates in the base and $u^1,\dots,u^m$ are coordinates along the fiber
of~$\pi$ the \emph{canonical coordinates}~$u_\sigma^j$ arise
in~$J^\infty(\pi)$ defined by
\begin{equation*}
  j_\infty(s)^*(u_\sigma^j)=\frac{\partial^{|\sigma|}s^j}{\partial x_\sigma},
\end{equation*}
where~$s=(s^1,\dots,s^m)$ is a local section of $\pi$, $j_\infty(s)$ is its
infinite jet and~$\sigma=i_1i_2\dots i_{\abs{\sigma}}$, $i_\alpha=1,\dots,n$,
is a multi-index. Denote by $\mathcal{F}(\pi)$ the algebra of smooth functions on
$J^\infty(\pi)$.

The basic geometrical structure on $J^\infty(\pi)$ is the \emph{Cartan
  distribution} $\mathcal{C}$ that is spanned by \emph{total derivatives}
\begin{equation*}
  D_i=\frac{\partial}{\partial x_i}+\sum_{j,\sigma} u^j_{\sigma i}\frac{\partial}{\partial u^j_\sigma}.
\end{equation*}
Differential operators on $ J^\infty(\pi)$ in total derivatives will be called
$\mathcal{C}$-\emph{differential operators}. In local coordinates, they have the form
$\|\sum_\sigma a^\sigma_{ij}D_\sigma\|$, where $a^\sigma_{ij}\in\mathcal{F}(\pi)$.
Let $P$ and $Q$ be $\mathcal{F}(\pi)$-modules of sections of some vector bundles over
$J^\infty(\pi)$. All $\mathcal{C}$-differential operators from a $P$ to $Q$ form an
$\mathcal{F}(\pi)$-module denoted by $\CDiff(P,Q)$.  The \emph{adjoint operator} to a
$\mathcal{C}$-differential operator $\Delta\colon P\rightarrow Q$ is denoted by
$\Delta^*\colon \hat{Q} \rightarrow\hat{P}$, where
$\hat{P}=\Hom_{\mathcal{F}(\pi)}(P,\bar\Lambda^n(\pi))$ and $\bar\Lambda^n(\pi)$ is
the $\mathcal{F}(\pi)$-module of \emph{horizontal} $n$-form on $J^\infty(\pi)$, i.e.,
forms $\omega=a\,dx_1\wedge\dots\wedge\,dx_n$.

Denote by $\CDiffskew_{(k)}(P,Q)$ the module of $k$-linear skew-symmetric
$\mathcal{C}$-differential operators $P\times\dots\times P\rightarrow Q$ and by
$\CDiffskad_{(k)}(P,\hat{P})\subset\CDiffskew_{(k)}(P,\hat P)$ the subset of
operators skew-adjoint in each argument.

A $\pi_\infty$-vertical vector field on $J^\infty(\pi)$ is called
\emph{evolutionary} if it preserves the Cartan distribution. There is a
one-to-one correspondence between evolutionary vector fields and sections of
the bundle~$\pi_\infty^*(\pi)$. Denote by $\kappa(\pi)$ the corresponding
module of sections. In local coordinates, the evolutionary vector field that
corresponds to a section (the \emph{generating} section, or function)
$\phi=(\phi^1,\dots,\phi^m)$ is of the form
\begin{equation*}
  \re_\varphi=\sum_{j,\sigma}D_\sigma(\varphi^j)\frac{\partial}{\partial u^j_\sigma}.
\end{equation*}
The commutator of evolutionary vector fields induces in~$\kappa(\pi)$ a Lie
algebra structure that is given by the \emph{higher Jacobi bracket} defined as
a unique section $\{\varphi,\psi\}$
satisfying~$[\re_\phi,\re_\psi]=\re_{\{\varphi,\,\psi\}}$. The bracket
${\{\varphi,\psi\}}$ is expressed by
\begin{equation*}
  \{\varphi,\psi\}=\re_\varphi(\psi)-\re_\psi(\varphi).
\end{equation*}

Following~\cite{Ham,Mult-vect}, we shall call elements of $\kappa(\pi)$
\emph{variational vectors}, elements of the module
$\CDiffskad_{(k-1)}(\hat\kappa,\kappa)$ will be called \emph{variational
  $k$-vectors}, while elements of $\hat\kappa$ will be called
\emph{variational $1$-forms} and elements of the module
$\CDiffskad_{(k-1)}(\kappa,\hat\kappa)$ variational $k$-\emph{forms},
respectively. The Lie derivative on variational vectors
$L_\phi\colon\kappa\rightarrow\kappa$ takes the form
\begin{equation}
 L_\phi=\re_\phi-\ell_\phi,\label{eq:1}
\end{equation}
where the \emph{linearization operator} $\ell_\phi$ is defined by the
equality $\ell_\phi(\alpha)=\re_\alpha(\phi)$, $\alpha\in\kappa$.  Un
local coordinates, it has the form
\begin{equation*}
  \ell_\phi(\alpha)=\sum_{j,\sigma}\frac{\partial\phi}{\partial u^j_\sigma}D_\sigma(\alpha^j),
  \quad\alpha=(\alpha^1,\dots,\alpha^m).
\end{equation*}
The Lie derivative on variational forms $L_\phi\colon
\hat\kappa\rightarrow\hat\kappa$ is of the form
\begin{equation}\label{eq:3}
  L_\phi=\re_\phi+\ell^*_\phi.
\end{equation}


\subsection{Variational Poisson--Nijenhuis structures}

Recall (see~\cite{Ham}) that the \emph{variational Schouten bracket} of two
operators~$A$, $B\in\CDiffskad(\hat\kappa,\kappa)$ is defined by
\begin{multline}\label{var-Scho2}
  \ldb A,B\rdb(\psi_1,\psi_2)=-\ell_{A,\,\psi_1}(B\psi_2)+
  \ell_{A,\,\psi_2}(B\psi_1)-A(\ell^*_{B,\,\psi_1}(\psi_2))\\
  -\ell_{B,\,\psi_1}(A\psi_2)+
  \ell_{B,\,\psi_2}(A\psi_1)-B(\ell^*_{A,\,\psi_1}(\psi_2)),\quad
  \psi_1,\psi_2\in\hat\kappa.
\end{multline}
An operator $A$ is called \emph{Hamiltonian} if the $\ldb A,A\rdb=0$ and two
Hamiltonian operators~$A$ and~$B$ are \emph{compatible} if their Schouten
bracket vanishes.

\begin{rem}
  Here and below the notation~$\ell_{\Delta,p_1,\dots,p_n}(\phi)$,
  $\phi\in\kappa$, for a $\mathcal{C}$-differential operator~$\Delta\colon
  P\times\dots\times P\to Q$ means
  \begin{equation*}
    \ell_{\Delta,p_1,\dots,p_n}(\phi)=\re_\phi(\Delta)(p_1,\dots,p_n),\qquad
    p_1,\dots,p_n\in P.
  \end{equation*}
\end{rem}

For two operators $R$, $S\in\CDiff(\kappa,\kappa)$ their
\emph{Fr\"{o}licher--Nijenhuis bracket} (cf.~with \cite{C-cohom}, see
also~\cite{Vin-book}) is defined by
\begin{multline}\label{FN1}
  [R,S]_{FN}(\phi_1,\phi_2)=\{R\phi_1,S\phi_2\}+\{S\phi_1,R\phi_2\}
  -R(\{S\phi_1,\phi_2\}+\{\phi_1,S\phi_2\} \\
  -S\{\phi_1,\phi_2\})-S(\{R\phi_1,\phi_2\}+
  \{\phi_1,R\phi_2\}-R\{\phi_1,\phi_2\}),\quad\phi_1,\phi_2\in\kappa.
\end{multline}
If $[R,R]_{FN}=0$ we shall refer to $R$ as a \emph{Nijenhuis operator}. For
particular computations it is convenient to use the equality
\begin{multline}
  [R,S]_{FN}(\phi_1,\phi_2)=
  -\ell_{R,\,\phi_1}(S\phi_2)-\ell_{S,\,\phi_1}(R\phi_2)
  +\ell_{R,\,\phi_2}(S\phi_1)+\ell_{S,\,\phi_2}(R\phi_1)\\
  +R\left(\ell_{S,\,\phi_1}(\phi_2)-\ell_{S,\,\phi_2}(\phi_1)\right)
  +S\left(\ell_{R,\,\phi_1}(\phi_2)-\ell_{R,\,\phi_2}(\phi_1)\right).
\end{multline}

\begin{defn}[cf.~with~\cite{KS-Ma}]
  A Hamiltonian operator $A\in\CDiffskad(\hat\kappa,\kappa)$ and a Nijenhuis
  operator $R\in\CDiff(\kappa,\kappa)$ constitute a \emph{variational
    Poisson--Nijenhuis structure} $(A,R)$ on $J^\infty(\pi)$ if the following
  compatibility conditions hold
\begin{align}
  \text{(i)}&\quad R\circ A=A\circ R^*,\\
  \text{(ii)}&\quad C(A,R)(\psi_1,\psi_2)=L_{A\psi_1}(R^*\psi_2)-
  L_{A\psi_2}(R^*\psi_1)+ R^*
  L_{A\psi_2}(\psi_1)-\nonumber\\
  &\quad R^* L_{A\psi_1}(\psi_2)+ \mathbf{E}\langle\psi_1,AR\psi_2\rangle-
  R^*\mathbf{E}\langle\psi_1,A\psi_2\rangle=0,
\end{align}
where $\mathbf{E}\colon\bar H^n(\pi)\rightarrow\hat\kappa$ is the Euler
operator and $\bar H^n(\pi)$ is the $n$th horizontal de~Rham cohomology group,
while $\langle.\,,.\rangle\colon\hat\kappa\times\kappa\to\bar H^n(\pi)$ is the
natural pairing.
\end{defn}
In terms of linearization operators condition~(ii) has the form
\begin{multline*}
  C(A,R)(\psi_1,\psi_2)=-\ell_{R^*,\,\psi_1}(A\psi_2)+
  \ell_{R^*,\,\psi_2}(A\psi_1)+
  \ell^*_{A,\,\psi_1}(R^*\psi_2)\\
  +\ell^*_{R^*,\,\psi_1}(A\psi_2)-R^*(\ell^*_{A,\,\psi_1}(\psi_2))=0.
\end{multline*}
Similarly to the finite-dimensional case, we have the following
\begin{prop}
  Let a Hamiltonian operator $A\in\CDiffskad(\hat\kappa,\kappa)$ and a
  Nijenhuis operator $R\in\CDiff(\kappa,\kappa)$ define a Poisson--Nijenhuis
  structure on $J^\infty(\pi)$. Then the composition $R\circ A$ is a
  Hamiltonian operator compatible with $A$.
\end{prop}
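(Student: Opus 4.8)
The plan is to deduce the two identities the statement requires --- $\ldb R\circ A,A\rdb=0$ (compatibility with $A$) and $\ldb R\circ A,R\circ A\rdb=0$ (the Hamiltonian property of $R\circ A$) --- from the structural data already available: $\ldb A,A\rdb=0$, $[R,R]_{FN}=0$, and the compatibility conditions~(i)--(ii). First I would record that~(i) gives $(R\circ A)^*=A^*\circ R^*=-A\circ R^*=-R\circ A$, so $R\circ A$ is a genuine variational bivector and both Schouten brackets are defined; this also reduces ``compatible with $A$'' to the single identity $\ldb R\circ A,A\rdb=0$ once Hamiltonicity is known.

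The computational engine would be the Leibniz rule for the evolutionary derivation $\re_\chi$ on the composition $R\circ A$, namely $\ell_{R\circ A,\,\psi}=\ell_{R,\,A\psi}+R\circ\ell_{A,\,\psi}$ and, dually, $\ell^*_{R\circ A,\,\psi}=\ell^*_{R,\,A\psi}+\ell^*_{A,\,\psi}\circ R^*$, together with the adjunction $\langle\ell_{R^*,\psi}(\chi),\phi\rangle=\langle\psi,\ell_{R,\phi}(\chi)\rangle$ relating the linearizations of $R$ and of $R^*$. Substituting these into the defining formula~\eqref{var-Scho2} for $\ldb R\circ A,A\rdb(\psi_1,\psi_2)$, using~(i) to commute $R$ and $R^*$ past $A$, and rearranging, one expects every resulting term to fall into one of two families: a family that reassembles into a combination of values of $\ldb A,A\rdb$ (on the arguments $\psi_i$ and $R^*\psi_j$, with $R$ applied on the outside where needed) and a family that reassembles into $A$ applied to a value of $C(A,R)$. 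Both families vanish by hypothesis, so $\ldb R\circ A,A\rdb=0$.

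For the Hamiltonian property one expands $\ldb R\circ A,R\circ A\rdb(\psi_1,\psi_2)$ in the same manner. Now three families should appear: terms reassembling into (values of) $\ldb R\circ A,A\rdb$ and $\ldb A,A\rdb$; terms reassembling into $C(A,R)$; and a purely $R$-dependent family. The first two vanish by the previous step and by Hamiltonicity of $A$; the third, after one rewrites its $\ell_{R,\,\cdot}$-terms by means of the second displayed formula for the Fr\"{o}licher--Nijenhuis bracket in Section~\ref{sec:preliminaries} and uses~(i) and the adjunction once more, should reduce to $[R,R]_{FN}$ evaluated on $A\psi_1$ and $A\psi_2$ (up to an outer application of $R$ or $A$), hence vanish because $R$ is Nijenhuis. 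This proves that $R\circ A$ is Hamiltonian and compatible with $A$; the very same bookkeeping in fact yields that $(R\circ A,R)$ is again a variational Poisson--Nijenhuis structure, which is the inductive step behind Theorem~\ref{theor:main}.

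The hard part is entirely the combinatorics of the regrouping: after the Leibniz split one has well over a dozen terms, and the whole content of the argument is to see which of them combine into $\ldb A,A\rdb$, $C(A,R)$ or $[R,R]_{FN}$, on which arguments --- the substitution $\psi_i\mapsto R^*\psi_i$ forced by~(i) is what makes this delicate --- and with which signs. A second, more structural point to watch is that the $L$-form of condition~(ii) carries the total-divergence terms $\mathbf{E}\langle\,\cdot\,,\,\cdot\,\rangle$; working throughout with the linearization form of~(ii) removes them, but one must then make sure the term-by-term identifications are honest operator equalities, not merely equalities modulo the image of the horizontal de~Rham differential. As a safeguard I would first run the whole computation in the evolutionary setting of Section~\ref{sec:vari-poiss-nijenh}, where every bracket becomes an explicit higher Jacobi bracket, and then observe that no step was coordinate-dependent.
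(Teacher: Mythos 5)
Your plan is essentially the paper's own proof: the two regroupings you predict are exactly the paper's identities \eqref{eq:comp-cond-2} (expressing $2\ldb A,RA\rdb$ through $\ldb A,A\rdb$ on $R^*$-shifted arguments plus $A(C(A,R))$) and \eqref{eq:comp-cond-1} (expressing $\ldb RA,RA\rdb$ through $R\ldb A,RA\rdb$, $R^2\ldb A,A\rdb$ and $[R,R]_{FN}(A\psi_1,A\psi_2)$), and like the paper you leave the term-by-term verification as a straightforward Leibniz-rule computation. Your preliminary remark that condition (i) makes $R\circ A$ skew-adjoint, and your caution about using the linearization form of (ii), are correct and consistent with the paper's setting.
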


\begin{proof}
  By straightforward computations one can prove that
  \begin{multline}\label{eq:comp-cond-1}
    \ldb RA,RA\rdb(\psi_1,\psi_2)-
    2R\ldb A,RA\rdb(\psi_1,\psi_2)\\
    +R^2\ldb A,A\rdb(\psi_1,\psi_2)- [R,R]_{FN}(A\psi_1,A\psi_2)=0
  \end{multline}
  and
  \begin{multline}\label{eq:comp-cond-2}
    2\ldb A,RA\rdb(\psi_1,\psi_2)-
    \ldb A,A\rdb(R^*\psi_1,\psi_2)\\
    -\ldb A,A\rdb(\psi_1,R^*\psi_2)-
    2A\left(C(A,R)(\psi_1,\psi_2)\right)=0,
  \end{multline}
  from where the statement follows immediately.
\end{proof}

\begin{theor}\label{theor:main}
  Let a Hamiltonian operator $A\in\CDiffskad(\hat\kappa,\kappa)$ and a
  Nijenhuis operator $R\in\CDiff(\kappa,\kappa)$ define Poisson--Nijenhuis
  structure on $J^\infty(\pi)$. Then on $J^\infty(\pi)$ there is a hierarchy
  of iterated Hamiltonian operators\textup{,} that is a sequence of
  Hamiltonian operators $R^iA$\textup{,} $i\geq0$\textup{,} which are
  pair-wise compatible\textup{,} i.e.\textup{,} $\ldb
  R^iA,R^jA\rdb=0$\textup{,} $i,j\geq0$.
\end{theor}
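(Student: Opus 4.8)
The plan is to reduce everything to an inductive use of the two ``master identities''~\eqref{eq:comp-cond-1} and~\eqref{eq:comp-cond-2}, together with their polarizations in the Hamiltonian argument, exactly in the spirit of the finite-dimensional Magri--Morosi theorem. Write $A_i:=R^iA$, so that $A_0=A$ and $A_{i+1}=R\circ A_i$; the goal is $\ldb A_i,A_j\rdb=0$ for all $i,j\ge0$. First I would observe that compatibility condition~(i) propagates for free: from $R\circ A=A\circ R^*$ one gets inductively $R\circ A_i=A_i\circ R^*$, hence $A_i=A\circ(R^*)^i$, so each $A_i$ is again skew-adjoint and satisfies the first compatibility relation with $R$.

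The genuine input is that the second compatibility condition propagates as well, i.e. $C(A_i,R)=0$ for every $i$. The mechanism is a ``chain-rule'' identity expressing $C(RB,R)$, for an arbitrary $B\in\CDiffskad(\hat\kappa,\kappa)$, as a universal combination of $C(B,R)$, $[R,R]_{FN}$ and $\ldb B,B\rdb$ (suitably composed and evaluated with $R$ and $R^*$) that vanishes whenever these three do; establishing it is a straightforward but lengthy computation with $\mathcal C$-differential operators, their adjoints and the linearizations, of exactly the same type as the identities~\eqref{eq:comp-cond-1}--\eqref{eq:comp-cond-2} underlying the Proposition. Granting it, one runs an induction on $i$ of the conjunction ``$(A_i,R)$ is a variational Poisson--Nijenhuis structure and $A_i$ is Hamiltonian'': the inductive step uses the Proposition applied to $(A_{i-1},R)$ to get that $A_i=R\circ A_{i-1}$ is Hamiltonian, and the chain-rule identity (with $[R,R]_{FN}=0$, $\ldb A_{i-1},A_{i-1}\rdb=0$, $C(A_{i-1},R)=0$) to get $C(A_i,R)=0$. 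Applying the Proposition once more to $(A_i,R)$ also yields compatibility of consecutive members, $\ldb A_i,A_{i+1}\rdb=0$. I expect this propagation of $C(\,\cdot\,,R)$ to be the main obstacle: it is the variational analogue of the classical fact that $(NP,N)$ is Poisson--Nijenhuis whenever $(P,N)$ is, and all the real content of the theorem sits there.

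Finally I would close the proof by induction on the index sum $i+j$. Polarizing~\eqref{eq:comp-cond-1} (which is bilinear in the Hamiltonian argument) and using $[R,R]_{FN}=0$ gives the recursion
\[
  \ldb A_{i+1},A_{j+1}\rdb=R\ldb A_i,A_{j+1}\rdb+R\ldb A_{i+1},A_j\rdb-R^2\ldb A_i,A_j\rdb ,
\]
which expresses every $\ldb A_a,A_b\rdb$ with $a,b\ge1$ through brackets of strictly smaller index sum. The remaining pairs $\ldb A_0,A_s\rdb$ are treated by polarizing~\eqref{eq:comp-cond-2}: with $[R,R]_{FN}=0$, $C(A,R)=0$ and $C(A_{s-1},R)=0$ it reduces $\ldb A_0,A_s\rdb$ to $\ldb A_1,A_{s-1}\rdb$ (index sum $s$, both indices $\ge1$, already covered by the previous reduction) and to $\ldb A_0,A_{s-1}\rdb$ (smaller sum). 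With the base cases $\ldb A_0,A_0\rdb=\ldb A,A\rdb=0$ and $\ldb A_0,A_1\rdb=\ldb A,RA\rdb=0$ — the latter being the diagonal case of~\eqref{eq:comp-cond-2} — the induction then gives $\ldb A_i,A_j\rdb=0$ for all $i,j\ge0$, which is the assertion. The bookkeeping in this last step is routine once the propagation of Step~2 is available.
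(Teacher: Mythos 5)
Your argument is correct in outline and does prove the statement, but it is not the paper's route, and the difference is worth recording. You propagate the \emph{full} Poisson--Nijenhuis structure along the hierarchy: your key lemma is a ``chain-rule'' identity expressing $C(RB,R)$ through $C(B,R)$, $[R,R]_{FN}$ and $\ldb B,B\rdb$, whence $C(R^iA,R)=0$ and each pair $(R^iA,R)$ is again a variational PN structure; pairwise compatibility is then closed by an induction on $i+j$ using the polarized identities~\eqref{eq:comp-cond-1a} and~\eqref{eq:comp-cond-2a}, and that bookkeeping is sound. The paper deliberately avoids your propagation lemma: its inductive invariant is only the weaker family of conditions $A(C(R^iA,R^j))=0$, and at the point where you invoke the chain rule it instead writes~\eqref{eq:comp-cond-2a} twice --- once with $R$ and once with $R^2$ --- and cancels the two leftover concomitant terms via~\eqref{eq:comp-cond-3}, which gives $C(R^nA,R)+C(R^{n-1}A,R^2)=0$. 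Note that~\eqref{eq:comp-cond-3} by itself does \emph{not} yield $C(RA,R)=0$ from $C(A,R)=0$, because of the $C(A,R^2)$ term; so your chain-rule identity is a genuine extra input, the variational analogue of the fact of~\cite{KS-Ma} that $(NP,N)$ is PN whenever $(P,N)$ is, and to obtain it from the paper's stock of formulas one needs, besides~\eqref{eq:comp-cond-3}, an identity expressing $C(A,R^2)$ through $C(A,R)$ and $[R,R]_{FN}$. Leaving it as an asserted ``straightforward but lengthy computation'' is the one real debt of your proposal, though it is a debt of the same kind as the paper's own unproved formulas~\eqref{eq:comp-cond-1a}--\eqref{eq:comp-cond-3}, and the identity is true. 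What your route buys is a stronger intermediate result (each $(R^iA,R)$ is itself a PN structure) and a cleaner final induction; what the paper's route buys is that only its three displayed identities are ever needed, at the price of dragging the auxiliary conditions involving higher powers $R^j$ through the induction.
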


\begin{proof}
  The proof is by induction on $n=\max(i,j)$. For $n=1$ the statement follows
  from the proposition above. Assume now that $\ldb R^iA,R^jA\rdb=0$,
  $i,j=0,\dots,n$, and $A(C(R^iA,R^j))=0$, $i+j\leq n$, and let us prove that
  $\ldb R^iA,R^{n+1}A\rdb=0$, $i=0,\dots,n+1$, and $A(C(R^iA,R^j))=0$,
  $i+j\leq n+1$.

  First, note that by direct computations one can prove the following formulas
  \begin{align}
    &\ldb RA,RB\rdb(\psi_1,\psi_2)- R\ldb RA,B\rdb(\psi_1,\psi_2)-
    R\ldb A,RB\rdb(\psi_1,\psi_2)+R^2\ldb A,B\rdb(\psi_1,\psi_2)\nonumber\\
    &\label{eq:comp-cond-1a}\qquad- [R,R]_{FN}(A\psi_1,B\psi_2)-
    [R,R]_{FN}(B\psi_1,A\psi_2)=0,\\
    &\ldb RA,B\rdb(\psi_1,\psi_2)+ \ldb A,RB\rdb(\psi_1,\psi_2)-
    \ldb A,B\rdb(R^*\psi_1,\psi_2)-\ldb A,B\rdb(\psi_1,R^*\psi_2)\nonumber\\
    &\label{eq:comp-cond-2a}\qquad- A\left(C(B,R)(\psi_1,\psi_2)\right)-
    B\left(C(A,R)(\psi_1,\psi_2)\right)=0,\\
    &C(RA,R)(\psi_1,\psi_2)+
    C(A,R^2)(\psi_1,\psi_2)-C(A,R)(R^*\psi_1,\psi_2)\nonumber\\
    &\label{eq:comp-cond-3}\qquad- C(A,R)(\psi_1,R^*\psi_2)-
    R^*\left(C(A,R)(\psi_1,\psi_2)\right)=0.
  \end{align}
  Let us substitute $R^nA$ and $R^{n-l}A$, $l=1,\dots,n$, for $A$ and $B$
  in~\eqref{eq:comp-cond-2a}, respectively. Then we get
  \begin{equation}\label{eq2-proof2}
    \ldb R^{n+1}A,R^{n-l}A\rdb-R^{n-l}A\left(C(R^{n}A,R)\right)=0.
  \end{equation}
  Now let us take $R^{n-1}A$, $R^{n-l}A$, $l=1,\dots,n$, and $R^2$ for $A$, $B$
  and $R$ in~\eqref{eq:comp-cond-2a}, respectively. Then we have
  \begin{equation}\label{eq2-proof3}
    \ldb R^{n+1}A,R^{n-l}A\rdb-R^{n-l}A\left(C(R^{n-1}A,R^2)\right)=0.
  \end{equation}
  If we have in~\eqref{eq:comp-cond-3} $R^{n-1}A$ for $A$ we get
  \begin{equation}\label{eq6-proof4}
    C(R^nA,R)+C(R^{n-1}A,R^2)=0.
  \end{equation}
  Therefore, taking the sum of~\eqref{eq2-proof2} and~\eqref{eq2-proof3} we
  obtain that $\ldb R^{n+1}A,R^{n-l}A\rdb=0$ for $l=1,\dots,n$.

  Let us substitute now $R^{n-1}A$ and $R^nA$ for $A$ and $B$
  in~\eqref{eq:comp-cond-1a}, respectively, and then $R^nA$ for $A$
  in~\eqref{eq:comp-cond-1}. Thus we get $\ldb R^{n+1}A,R^{n}A\rdb=0$ and
  $\ldb R^{n+1}A,R^{n+1}A\rdb=0$. In order to prove that $A(C(R^iA,R^j))=0$
  for $i+j\leq n+1$, one has to put $B=R^{n-l}A$, $l=0,\dots,n$ and take
  $R^{l+1}$ for $R$ in~\eqref{eq:comp-cond-2a}.
\end{proof}

For subsequent constructions we need the operator
\begin{multline*}
  C^*(A,R)(\psi,\phi)=-\ell_{A,\,\psi}(R\phi)+ \ell_{R,\,\phi}(A\psi)+
  R(\ell_{A,\,\psi}(\phi))\\
  +A(\ell^*_{R,\,\phi}(\psi)-\ell_{R^*,\,\psi}(\phi))
\end{multline*}
defined by
\begin{equation}\label{eq:15}
  \langle C(A,R)(\psi_1,\psi_2),\phi\rangle=
  \langle\psi_2,C^*(A,R)(\psi_1,\phi)\rangle.
\end{equation}


\section{Variational Poisson--Nijenhuis structures on evolution equations}\label{sec:vari-poiss-nijenh}
\subsection{Symmetries and cosymmetries}

Consider a system of evolution equations
\begin{equation}\label{eq:ev}
  \mathcal{E}=\{F=u_t-f(x,t,u,u_1,\dots,u_k)=0\},
\end{equation}
where both $u=(u^1,\dots,u^m)$ and $f=(f^1,\dots,f^m)$ are vectors and
$u_t=\partial u/\partial t$, $u_k=\partial^k u/\partial x_k$. For simplicity, we consider the case
of one space variable $x$, though everything works in general situation as
well. Denote by~$\mathcal{F}(\mathcal{E})$ the algebra of smooth functions
on~$\mathcal{E}$.

Recall that equation~\eqref{eq:ev} $\mathcal{E}$ can be understood as the space
$J^\infty(\pi)\times\mathbb{R}$ with the Cartan distribution generated by the
fields~$D_x$ and $D_t=\partial/\partial t+\re_f$. Here~$t$ the coordinate along
$\mathbb{R}$. In local coordinates, these fields are of the form
\begin{equation*}
  D_x=\frac{\partial}{\partial x}+\sum_{j=1}^m\sum_{k\geq 0}u^j_{k+1}\frac{\partial}{\partial
    u^j_k},\quad
  D_t=\frac{\partial}{\partial t}+\sum_{j=1}^m\sum_{k\geq 0}D^k_x(f^j)\frac{\partial}{\partial u^j_k}.
\end{equation*}

A \emph{symmetry} of the equation~$\mathcal{E}$ is a $\pi_\infty$-vertical
vector field on~$\mathcal{E}$ that preserves the Cartan distribution. The set
of all symmetries forms a Lie algebra over $\mathbb{R}$ denoted
by~$\sym(\mathcal{E})$ and there is a one-to-one correspondence
between~$\sym(\mathcal{E})$ and smooth
sections~$\phi\in\Gamma(\pi_\infty^*(\pi))=\kappa(\mathcal{E})$ satisfying the
equation
\begin{equation}  \label{eq:sym}
  \ell_{\mathcal{E}}(\phi)=0,
\end{equation}
where~$\ell_{\mathcal{E}}=D_t-\ell_f$ is the linearization operator
of~$\mathcal{E}$.

A \emph{conservation law} for the equation \eqref{eq:ev} is a horizontal
$1$-form~$\eta=Xdx+Tdt$ closed with respect to horizontal de Rham differential
$\bar d\colon\bar\Lambda^1(\mathcal{E})\rightarrow\bar\Lambda^2(\mathcal{E})$, i.e., such that
\begin{equation*}
  D_t(X)=D_x(T),
\end{equation*}
where $X,T\in\mathcal{F}(\mathcal{E})$. A conservation law $\eta$ is trivial if it is of the
form $\eta=\bar d h$, $h\in\mathcal{F}(\mathcal{E})$. The space of equivalence classes of
conservation laws coincides with the first horizontal de~Rham cohomology group
and is denoted by $\bar H^1(\mathcal{E})$.  To any conservation law $\eta=Xdx+Tdt$
there correspond its generating function $\psi_\eta=\mathbf{E}(\eta)$ that
satisfies the equation
\begin{equation}\label{eq:gf}
 \ell^*_\mathcal{E}(\psi_\eta)=0.
\end{equation}
Solutions of the last equation are called \emph{cosymmetries} of
equation $\mathcal{E}$ and the space of cosymmetries of $\mathcal{E}$ will be
denoted by $\sym^*(\mathcal{E})$.


\subsection{Invariant Poisson--Nijenhuis structures}
\label{sec:invar-poiss-nijenh}

Consider the modules~$\kappa$ and~$\hat\kappa$ on the
space~$J^\infty(\pi)\times\mathbb{R}$ of extended jets, i.e., we admit
explicit dependence of their elements on~$t$. Since vector fields
on~$\mathcal{E}$ act on~$\kappa$ and~$\hat\kappa$ by Lie derivatives, we can
give the following
\begin{defn}
  An operator~$O$ acting from~$\kappa$ to~$\kappa$ (or from~$\kappa$
  to~$\hat\kappa$, etc.) is called invariant if~$L_{D_t}\circ O=O\circ
  L_{D_t}$.
\end{defn}

\begin{prop}
  An operator~$A\colon\hat\kappa\to\kappa$ is invariant iff
  \begin{equation}
    \label{eq:2}
    \ell_F\circ A+A\circ\ell_F^*=0.
  \end{equation}
  An operator~$R\colon\kappa\to\kappa$ is invariant iff
  \begin{equation}
    \label{eq:4}
    \ell_F\circ R-R\circ\ell_F=0.
  \end{equation}
\end{prop}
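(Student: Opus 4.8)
The plan is to prove both equivalences by direct computation, translating the invariance condition $L_{D_t}\circ O = O\circ L_{D_t}$ into the displayed identities using the explicit formulas~\eqref{eq:1} and~\eqref{eq:3} for the Lie derivative. First I would recall that, on the extended jet space $J^\infty(\pi)\times\mathbb{R}$, the evolutionary derivation part of $L_{D_t}$ is $\re_f$ together with $\partial/\partial t$, and that the fibrewise-linear part acts on the operator $O$ by $\re_f(O)$ (in the sense of the Remark following~\eqref{var-Scho2}). The key observation is that for a $\mathcal{C}$-differential operator, evaluating $[L_{D_t},O]$ on an argument and collecting the purely algebraic (non-derivation) terms leaves exactly the commutator of $O$ with the linearization pieces $\ell_f$ or $\ell_f^*$, because the $\re_f$-terms cancel by the Leibniz rule. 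Since $\ell_F = D_t - \ell_f$ in the evolutionary case, and $D_t$ is common to both sides, the condition reduces to a relation purely in $\ell_f$, which one then rewrites in terms of $\ell_F$.

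For the case $A\colon\hat\kappa\to\kappa$: I would write $L_{D_t}\circ A - A\circ L_{D_t}$ acting on $\psi\in\hat\kappa$, using $L_{D_t}|_\kappa = \re_f - \ell_f$ on the target and $L_{D_t}|_{\hat\kappa} = \re_f + \ell_f^*$ on the source (the $\partial/\partial t$ contribution being absorbed consistently on both sides). The $\re_f$-derivation acts on the composite $A\psi$ by $\re_f(A)\psi + A(\re_f\psi)$; the middle term cancels against the $\re_f$ acting through $A$ on $\psi$, leaving $\re_f(A)\psi - \ell_f(A\psi) - A(\ell_f^*\psi)$. Here $\re_f(A) = \ell_{A}$ evaluated appropriately, but the cleaner route is to note $\re_f(A)\psi = (D_t(A) - A_t)\psi$ in a way that recombines with $-\ell_f$ and $-A\ell_f^*$ to give precisely $-(\ell_f\circ A + A\circ\ell_f^*)\psi$ up to the $D_t$-terms. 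Once the $D_t$ parts are seen to cancel identically (they represent $D_t\circ A - A\circ D_t$, which the $\re_f(A)$-term was designed to complete into $\ell_F\circ A - \ldots$), invariance becomes $\ell_F\circ A + A\circ\ell_F^* = 0$, which is~\eqref{eq:2}. The sign asymmetry (a $+$ rather than a $-$) is exactly the manifestation of the $\ell_f^*$ in~\eqref{eq:3} versus $-\ell_\phi$ in~\eqref{eq:1}.

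For $R\colon\kappa\to\kappa$ the computation is the analogous one but now with $L_{D_t} = \re_f - \ell_f$ on both source and target: the $\re_f$-Leibniz term cancels as before, and one is left with $\re_f(R)\phi - \ell_f(R\phi) + R(\ell_f\phi)$, which together with the implicit $D_t\circ R - R\circ D_t$ assembles into $-(\ell_F\circ R - R\circ\ell_F)\phi$, giving~\eqref{eq:4}. The main obstacle I anticipate is purely bookkeeping: keeping straight which occurrences of $\re_f$, $\partial/\partial t$, and the operator-derivative $\re_f(O)$ belong to which side of the commutator, and verifying that the collection $D_t(O)$-terms genuinely reconstitutes $\ell_F$ rather than leaving a spurious remainder. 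I would handle this by working in local coordinates with a generic symbol $O = \|\sum_\sigma a^\sigma D_\sigma\|$, checking the cancellation of all $D_\sigma(\phi)$-terms coefficient by coefficient, and then re-expressing the surviving algebraic terms invariantly; no conceptual difficulty is expected, only the care needed to track adjoints and signs.
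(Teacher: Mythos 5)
Your proposal is correct and follows essentially the same route as the paper: the paper simply observes from~\eqref{eq:1} and~\eqref{eq:3} that $L_{D_t}=\partial/\partial t+\re_f-\ell_f=\ell_F$ on $\kappa$ and $L_{D_t}=\partial/\partial t+\re_f+\ell_f^*=-\ell_F^*$ on $\hat\kappa$, after which both equivalences are immediate, and your argument is the same computation carried out argument-wise via the Leibniz rule. (The overall sign you state in the $R$-case, $-(\ell_F\circ R-R\circ\ell_F)$, should be $+(\ell_F\circ R-R\circ\ell_F)$, but this does not affect the equivalence.)
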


\begin{proof}
  Indeed, from~\eqref{eq:1} one has for the action on~$\kappa$
  \begin{equation*}
    L_{D_t}=L_{\pd{}{t}+\re_f}=\pd{}{t}+\re_f-\ell_f=\ell_F,
  \end{equation*}
  while from~\eqref{eq:3} for the action on~$\hat\kappa$ we obtain
  \begin{equation*}
    L_{D_t}=L_{\pd{}{t}+\re_f}=\pd{}{t}+\re_f+\ell_f^*=-\ell_F^*
  \end{equation*}
  and this proves both~\eqref{eq:2} and~\eqref{eq:4}.
\end{proof}

\begin{rem}
  From~\eqref{eq:4} we see that an invariant $\mathcal{C}$-differential
  operatos takes symmetries of equation~$\mathcal{E}$ to symmetries, i.e., is a
  \emph{recursion operator} for symmetries of~$\mathcal{E}$. On the other
  hand, a $\mathcal{C}$-differential operator~$A$ that enjoys~\eqref{eq:2}
  takes cosymmetries to symmetries. As it was shown in~\cite{Ham}, if~$A$ is
  skew-adjoint and satisfies~$\ldb A,A\rdb=0$ it is a \emph{Hamiltonian
    structure} for~$\mathcal{E}$ and all Hamiltonian structures can be found
  in such a way.
\end{rem}

\begin{defn}
  Let~$(A,R)$ be a Poisson--Nijenhuis structure
  on~$J^\infty(\pi)\times\mathbb{R}$. We say that it is a Poisson--Nijenhuis
  structure on~$\mathcal{E}$ if both~$A$ and~$R$ are invariant operators.
\end{defn}

Thus, a Poisson--Nijenhuis on~$\mathcal{E}$ consists of a Hamiltonian
structure~$A$ and a recursion operator which is a Nijenhuis operator
compatible with~$A$.

\subsection{The $\ell^*$- and $\ell$-coverings.}
\label{sec:ell-ell-coverings}

We shall now look at Poisson--Nijenhuis structures from a different point of
view. To this end, consider the following extension of the equation~$\mathcal{E}$. Let
us add to $\mathcal{E}$ new \emph{odd} variable~$p=(p^1,\dots,p^m)$ that satisfies
\begin{equation}
  \label{eq:5}
  p_t=-\ell_f^*(p).
\end{equation}
The system consisting of the initial equation~$\mathcal{E}$ and
equation~\eqref{eq:5} is called the \emph{$\ell^*$-covering} of~$\mathcal{E}$
and is denoted by~ $\mathcal{L}^*\mathcal{E}$.  The extended total derivatives on
$\mathcal{L}^*\mathcal{E}$ are
\begin{equation*}
  \tilde D_x= D_x+\sum_{j=1}^m\sum_{k\geq 0} p^j_{k+1}\frac{\partial}{\partial p^j_k},\quad
  \tilde D_t=D_t-
  \sum_{j=1}^m\sum_{k\geq 0}
  \tilde D^k_x(\tilde\ell^*_f(p^j))\frac{\partial}{\partial p^j_k}.
\end{equation*}
Note that any $\mathcal{C}$-differential operator~$O$ on~$\mathcal{E}$ can be lifted to an
operator~$\tilde{O}$ on~$\mathcal{L}^*\mathcal{E}$ by superscribing tildes over
corresponding total derivatives.

Let~$A\colon\hat\kappa\to\kappa$ be a $\mathcal{C}$-differential operator of
the form~$\Vert\sum_{i\geq 0}a^l_{ij}D^i_x\Vert$. Consider a $p$-linear
vector function~$\mathcal{H}_A=(\mathcal{H}_A^1,\dots,\mathcal{H}_A^m)$,
\begin{equation}\label{ellast:solution}
 \mathcal{H}_A^{\,l}=\sum_{i,j}a^l_{ij}p^j_i, \qquad a^l_{ij}\in\mathcal{F}(\mathcal{E}).
\end{equation}

\begin{theor}[see~\cite{Ham}]\label{sec:ell-ell-coverings-4}
  A skew-adjoint operator~$A$ satisfies~\eqref{eq:2} iff
  \begin{equation}\label{eq:ellast}
    \tilde{\ell}_\mathcal{E}(\mathcal{H}_A)=0.
  \end{equation}
\end{theor}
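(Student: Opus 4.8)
The plan is to unwind both sides of the asserted equivalence into local-coordinate identities and compare them term by term. The key observation is that the lifted linearization $\tilde{\ell}_\mathcal{E} = \tilde D_t - \tilde\ell_f$ acts on the $p$-linear function $\mathcal{H}_A$, and because $\mathcal{H}_A$ is linear in the $p^j_i$ with coefficients in $\mathcal{F}(\mathcal{E})$, the action of $\tilde D_t$ splits into two pieces: one where $\tilde D_t$ hits the coefficients $a^l_{ij}$ (producing a $\mathcal{C}$-differential operator on $\mathcal{E}$ applied to $p$), and one where $\tilde D_t$ hits the $p$-variables via the evolution equation~\eqref{eq:5}, namely $p_t = -\ell_f^*(p)$. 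So first I would write out $\tilde D_t(\mathcal{H}_A^{\,l})$ explicitly, using $\tilde D_t^k(\tilde\ell_f^*(p^j))$ from the formula for $\tilde D_t$ on $\mathcal{L}^*\mathcal{E}$, and organize the result as an operator identity between $\mathcal{C}$-differential operators evaluated on $p$.

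Second, I would identify the coefficient-differentiation piece with $(\partial A/\partial t + \re_f(A))(p)$ — that is, the part of the invariance condition coming from $\re_f - \ell_f$ is already encoded, since on the $\ell^*$-covering the $t$-dependence and the $f$-flow are built into $\tilde D_t$. Concretely, the term in which derivatives fall on the coefficients $a^l_{ij}$ reproduces, after reindexing the sums over multi-indices, the operator $\ell_F \circ A$ applied to $p$ (recall $\ell_F = D_t - \ell_f$ acting on $\kappa$, by the Proposition). Third, the term in which $\tilde D_t$ acts through $p_t = -\ell_f^*(p)$, combined with the $-\tilde\ell_f$ term in $\tilde\ell_\mathcal{E}$, should be massaged — using the definition of the adjoint operator and integration-by-parts bookkeeping on the multi-indices — into exactly $A \circ \ell_f^*$ applied to $p$. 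Here the skew-adjointness of $A$ is what guarantees the two contributions assemble with the correct signs; without $A^* = -A$ the cross terms would not close up. Putting the pieces together, $\tilde\ell_\mathcal{E}(\mathcal{H}_A) = \mathcal{H}_{\ell_F A + A \ell_f^*}$ as $p$-linear functions, and since the correspondence $B \mapsto \mathcal{H}_B$ between $\mathcal{C}$-differential operators and $p$-linear functions is injective, $\tilde\ell_\mathcal{E}(\mathcal{H}_A) = 0$ iff $\ell_F \circ A + A \circ \ell_f^* = 0$, which is~\eqref{eq:2}.

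The main obstacle I expect is the careful bookkeeping in the second and third steps: tracking how $\tilde D_x^k$ applied to $\tilde\ell_f^*(p^j)$ interacts with the multi-index sums defining $A$ and its adjoint, and verifying that the integration-by-parts shifts of total derivatives across the $a^l_{ij}$ coefficients produce precisely $A \circ \ell_f^*$ rather than some operator differing by a divergence-type correction. This is essentially a (slightly tedious) verification that the adjoint is being computed consistently in the covering; it is routine in principle but is the one place where sign errors and off-by-one index shifts are easy to make. Since the statement is attributed to~\cite{Ham}, I would also note that one may alternatively invoke the general fact established there that the $\ell^*$-covering represents the cotangent structure, so that lifting and the linearized equation commute in the stated way; but I would prefer to give the direct computational verification for self-containedness.
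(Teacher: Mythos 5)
The paper itself contains no proof of this statement (it is imported from~\cite{Ham}), and your overall plan --- expand $\tilde{\ell}_{\mathcal{E}}(\mathcal{H}_A)$ on the covering, recognize the result as the $p$-linear function attached to a $\mathcal{C}$-differential operator, and conclude by injectivity of $B\mapsto\mathcal{H}_B$ --- is exactly the standard verification. But two of your intermediate claims are wrong, and the bookkeeping as you describe it would not close. First, the coefficient-differentiation piece is not ``$\ell_F\circ A$ applied to $p$'': it is the commutator $[D_t,A]$, i.e.\ the operator whose coefficients are $D_t(a^l_{ij})$ (note that $\ell_F\circ A$ is not even an operator in $D_x$ alone). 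Second, and more importantly, skew-adjointness of $A$ plays no role in the equivalence, and there is no adjoint/integration-by-parts step to perform: the adjoint $\ell_f^*$ enters only through the covering equation~\eqref{eq:5}. The piece where $\tilde{D}_t$ differentiates the $p$-variables is simply $\tilde{A}(p_t)=-\tilde{A}(\tilde{\ell}_f^*(p))$, which, because lifting respects composition, is the $p$-linear function of $-A\circ\ell_f^*$; it does not combine with the $-\tilde{\ell}_f$ term, which separately yields $-\ell_f\circ A$ applied to $p$. There are no ``cross terms'' requiring $A^*=-A$, and discarding or shifting terms by integration by parts would in any case be illegitimate here, since one needs the exact vanishing of the function $\tilde{\ell}_{\mathcal{E}}(\mathcal{H}_A)$, not vanishing modulo $\tilde{D}_x$-exact expressions.

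With the correct bookkeeping the three contributions assemble into
\begin{equation*}
  \tilde{\ell}_{\mathcal{E}}(\mathcal{H}_A)
  =\mathcal{H}_{\,[D_t,A]-\ell_f\circ A-A\circ\ell_f^*}
  =\mathcal{H}_{\,\ell_F\circ A+A\circ\ell_F^*},
\end{equation*}
where the second equality holds because in $\ell_F\circ A+A\circ\ell_F^*=(D_t-\ell_f)\circ A+A\circ(-D_t-\ell_f^*)$ the $D_t$-compositions cancel, leaving an operator in $D_x$ only; injectivity of $B\mapsto\mathcal{H}_B$ then gives the theorem. Note also that your final condition should read $\ell_F\circ A+A\circ\ell_F^*=0$, i.e.~\eqref{eq:2}, not $\ell_F\circ A+A\circ\ell_f^*=0$. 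Skew-adjointness appears in the hypothesis only because the theorem concerns candidates for Hamiltonian structures (where $A\in\CDiffskad(\hat\kappa,\kappa)$ is required for the Schouten-bracket formalism), not because the equivalence needs it; the belief that a cancellation hinges on $A^*=-A$ indicates that the computation you would actually carry out contains a spurious manipulation, so this step needs to be redone as above.
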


In the terminology of the covering theory~\cite{Nonloc}, a vector function
satisfying~\eqref{eq:ellast} is nothing but a \emph{shadow} of symmetry of
$\mathcal{E}$ in the $\ell^*$-covering or, more precisely, $\mathcal{H}_A$ is
generating section of the shadow
\begin{equation*}
  \tilde\re_{\mathcal{H}_A}=\sum_{k\geq0}\tilde D_x^k(\mathcal{H}_A^j)\pd{}{u_k^j}.
\end{equation*}
In coordinate-free terms, a shadow is a derivation
of~$\mathcal{F}(\mathcal{E})$ with values in the function algebra
on~$\mathcal{L}^*\mathcal{E}$ that preserves the Cartan distribution.

\begin{lemma}
  \label{sec:ell-ell-coverings-1}
  Let~$A\in\CDiffskad(\hat\kappa,\kappa)$ and~$\tilde{\re}_{\mathcal{H}_A}$ be the
  corresponding shadow. Then there exists a symmetry of the
  equation~$\mathcal{L}^*\mathcal{E}$ such that its restriction
  to~$\mathcal{F}(\mathcal{E})$ coincides with~$\tilde{\re}_{\mathcal{H}_A}$.
\end{lemma}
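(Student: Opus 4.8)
The plan is to extend the shadow $\tilde{\re}_{\mathcal{H}_A}$, which is so far only a derivation of $\mathcal{F}(\mathcal{E})$ acting on the $u$-variables, to a genuine evolutionary vector field on $\mathcal{L}^*\mathcal{E}$ by specifying how it acts on the odd variables $p^j$. Since $A$ is skew-adjoint, the natural candidate is to let the extension act on $p$ by (minus) the adjoint: writing $\tilde\re_{\mathcal{H}_A}$ on the $u$-sector via the generating section $\mathcal{H}_A$ built from $A$, I would try the evolutionary field on $\mathcal{L}^*\mathcal{E}$ whose generating section in the $u$-direction is $\mathcal{H}_A = A(p)$ and in the $p$-direction is determined by requiring compatibility with the constraint $p_t = -\ell_f^*(p)$. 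Concretely, one looks for a $p$-component $\mathcal{G}$ so that the field $\tilde\re_{(\mathcal{H}_A,\,\mathcal{G})} = \sum_k \tilde D_x^k(\mathcal{H}_A^j)\,\partial/\partial u_k^j + \sum_k \tilde D_x^k(\mathcal{G}^j)\,\partial/\partial p_k^j$ preserves the full Cartan distribution of $\mathcal{L}^*\mathcal{E}$, i.e. commutes with $\tilde D_x$ and $\tilde D_t$.

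First I would recall that preservation of the Cartan distribution on the $\ell^*$-covering is equivalent to two linearized equations: the $u$-sector condition $\tilde\ell_{\mathcal{E}}(\mathcal{H}_A)=0$, which is exactly \eqref{eq:ellast} and holds by Theorem~\ref{sec:ell-ell-coverings-4} since $A$ satisfies \eqref{eq:2}; and a $p$-sector condition obtained by linearizing the defining relation \eqref{eq:5} of $\mathcal{L}^*\mathcal{E}$. The latter reads, schematically, $\tilde D_t(\mathcal{G}) + \tilde\ell_f^*(\mathcal{G}) + (\text{derivative of } \ell_f^* \text{ in the direction } \mathcal{H}_A \text{ applied to } p) = 0$, where the last term is $\tilde\re_{\mathcal{H}_A}(\ell_f^*)(p) = \ell_{\ell_f^*(p),\,\mathcal{H}_A}$-type expression coming from the $u$-dependence of the coefficients of $\ell_f^*$. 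So the task reduces to exhibiting a $\mathcal{C}$-differential expression $\mathcal{G}$, linear in $p$, solving this inhomogeneous linear equation over $\mathcal{L}^*\mathcal{E}$.

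The key step is to produce $\mathcal{G}$ explicitly, and here I expect the skew-adjointness of $A$ together with the invariance condition \eqref{eq:2}, $\ell_F\circ A + A\circ\ell_F^* = 0$, to do the work. The idea is that the "dual" data to $\mathcal{H}_A = A(p)$ naturally lives in $\hat\kappa$ and its lift governs the $p$-evolution; one sets $\mathcal{G}$ to be built from the coefficients of $A$ (or of a related operator such as $\ell_{A,\,\cdot}$) contracted with $p$, and then checks that the $p$-sector equation becomes precisely an identity equivalent to \eqref{eq:2} after applying $\tilde D_x$-operators and using that tildes commute with the algebraic manipulations valid on $\mathcal{E}$. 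In other words, \eqref{eq:2} is not just necessary for \eqref{eq:ellast}; its adjoint content is exactly what makes the extension to the $p$-variables consistent. Once $\mathcal{G}$ is written down, verifying $[\tilde\re_{(\mathcal{H}_A,\mathcal{G})}, \tilde D_x]=0$ and $[\tilde\re_{(\mathcal{H}_A,\mathcal{G})}, \tilde D_t]=0$ is a routine (if lengthy) computation; by construction the restriction of this field to $\mathcal{F}(\mathcal{E})$ forgets the $p$-component and reproduces $\tilde\re_{\mathcal{H}_A}$.

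The main obstacle is guessing the right formula for the $p$-component $\mathcal{G}$ and confirming that the inhomogeneous term in the $p$-sector linearized equation — which involves the Lie derivative of the coefficients of $\ell_f^*$ along $\tilde\re_{\mathcal{H}_A}$ — is absorbed by the $\tilde D_t(\mathcal{G}) + \tilde\ell_f^*(\mathcal{G})$ part. I expect this to hinge on differentiating the identity \eqref{eq:2} "along the equation" and matching it, via integration by parts (i.e. passing operators to their adjoints, which on the covering corresponds to shuffling $\tilde D_x$'s), against the bracket term; the skew-adjointness of $A$ is what guarantees the signs line up. If a direct closed form for $\mathcal{G}$ is awkward, an alternative is to argue abstractly: the obstruction to lifting a shadow to an honest symmetry lies in a cohomology group of the covering, and for the $\ell^*$-covering this obstruction vanishes for shadows of the form $\mathcal{H}_A$ with $A$ skew-adjoint precisely because \eqref{eq:2} trivializes it; but the constructive route is preferable here since it will also be needed for the explicit bracket formulas promised later in the section.
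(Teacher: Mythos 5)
Your strategy is the same as the paper's in outline: extend the shadow $\tilde{\re}_{\mathcal{H}_A}$ by a $p$-component so that the resulting field is an honest symmetry of $\mathcal{L}^*\mathcal{E}$, observing that the $u$-sector condition is exactly \eqref{eq:ellast} and that what remains is the linearized equation coming from \eqref{eq:5}. But the whole content of the paper's proof is the explicit answer, which you never produce: one takes $\alpha=-\tfrac{1}{2}\,\tilde{\ell}^*_{\mathcal{H}_A}(p)$ and sets $\tilde{\re}_{\mathcal{H}_A,\bar{\mathcal{H}}_A}=\tilde{\re}_{\mathcal{H}_A}+\sum_{k,j}\tilde{D}_x^k(\alpha^j)\,\partial/\partial p^j_k$ (formula \eqref{eq:6}); the verification is then a direct computation in which the skew-adjointness of $A$, condition \eqref{eq:2}, and the oddness of $p$ enter essentially (in particular the factor $\tfrac{1}{2}$ is forced by the fact that $\tilde{\ell}^*_{\mathcal{H}_A}(p)$ is quadratic in the odd variables). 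You yourself identify ``guessing the right formula for $\mathcal{G}$'' as the main obstacle and leave it unresolved, offering only the indication that $\mathcal{G}$ should be built from the coefficients of $A$ (or of $\ell_{A,\cdot}$) contracted with $p$; since exhibiting and checking this lift is precisely what the lemma asserts, the proposal is a plan for a proof rather than a proof.

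The abstract fallback you sketch does not close the gap either: for a general covering, shadows need not lift to symmetries at all (this is exactly the nonlocal difficulty alluded to in Section~\ref{sec:nonloc-str} and the references on ghost symmetries), so the claim that ``the obstruction lies in a cohomology group and vanishes here because of \eqref{eq:2}'' would itself require an argument, which in practice again amounts to writing down $\alpha$. Moreover, a nonconstructive existence statement would not serve the rest of the section: the Jacobi bracket \eqref{eq:7} and its explicit form \eqref{eq:16}, including the $\tfrac{1}{2}$'s, are written in terms of this particular lift, so the explicit formula is needed downstream --- a point you acknowledge but do not remedy.
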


\begin{proof}
  Consider the vector function~$\alpha=(\alpha^1,\dots,\alpha^m)$ defined by
  \begin{equation*}
    \alpha=-\frac{1}{2}\tilde{\ell}_{\mathcal{H}_A}^*(p)
  \end{equation*}
  and set
  \begin{equation}
    \label{eq:6}
    \tilde{\re}_{\mathcal{H}_A,\bar{\mathcal{H}}_A}=\tilde{\re}_{\mathcal{H}_A}+
    \sum_{k,j}\tilde{D}_x^k(\alpha^j)\pd{}{p_k^j}.
  \end{equation}
  It is easily checked that the
  field~$\tilde{\re}_{\mathcal{H}_A,\bar{\mathcal{H}}_A}$ is the desired symmetry.
\end{proof}

Consider now two operators~$A$, $B\in\CDiffskad(\hat\kappa,\kappa)$ and using
Lemma~\ref{sec:ell-ell-coverings-1} define the \emph{Jacobi bracket}
of~$\mathcal{H}_A$ and~$\mathcal{H}_B$ by
\begin{equation}
  \label{eq:7}
  \{\mathcal{H}_A,\mathcal{H}_B\}=\tilde{\re}_{\mathcal{H}_A,\bar{\mathcal{H}}_A}(\mathcal{H}_B)
  +\tilde{\re}_{\mathcal{H}_B,\bar{\mathcal{H}}_B}(\mathcal{H}_A)
\end{equation}

\begin{rem}
  The plus sign in the right-hand side of equation~\eqref{eq:7} is due to the
  fact that both the fields~$\tilde{\re}_{\mathcal{H}_A,\bar{\mathcal{H}_A}}$,
  $\tilde{\re}_{\mathcal{H}_B,\bar{\mathcal{H}_B}}$ and the
  functions~$\mathcal{H}_B$, $\mathcal{H}_A$ are odd.
\end{rem}

\begin{rem}
  In more explicit terms the Jacobi bracket can be rewritten in the form
  \begin{equation}\label{eq:16}
    \{\mathcal{H}_A,\mathcal{H}_B\}=
    -\tilde{\ell}_{\mathcal{H}_A}(\mathcal{H}_B)-
    \tilde{\ell}_{\mathcal{H}_B}(\mathcal{H}_A)-
    \frac{A(\tilde{\ell}^*_{\mathcal{H}_B}(p))+
      B(\tilde{\ell}^*_{\mathcal{H}_A}(p))}{2}.
\end{equation}
\end{rem}

Note that there exists a one-to-one correspondence between
$\mathcal{C}$-differential operators from the module
$\CDiffskew_{(k)}(\hat\kappa,\kappa)$ and functions
on~$\mathcal{L}^*\mathcal{E}$ that are $(k-1)$-linear with respect to the odd
variables. For~$\Delta\in\CDiffskew_{(k)}(\hat\kappa,\kappa)$ denote
by~$\mathcal{H}_\Delta$ the corresponding function.  Then by direct
computations one can prove the following

\begin{theor}\label{sec:ell-ell-coverings-2}
  Let $\mathcal{H}_A$ and $\mathcal{H}_B$ be shadows of symmetries in the
  $\ell^*$-covering to which there correspond operators
  $A,B\in\CDiffskad(\hat\kappa,\kappa)$ . Then
  \begin{equation}\label{eq:8}
    \{\mathcal{H}_A,\mathcal{H}_B\}=\mathcal{H}_{\ldb A,B\rdb}.
  \end{equation}
\end{theor}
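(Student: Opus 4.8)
The plan is to verify the identity~\eqref{eq:8} by direct computation, comparing the explicit coordinate expression~\eqref{eq:16} for the Jacobi bracket with the explicit expression~\eqref{var-Scho2} for the variational Schouten bracket, after translating the latter through the dictionary $A\mapsto\mathcal{H}_A$. First I would record the basic rules of this dictionary: if $A=\Vert\sum a^l_{ij}D_x^i\Vert$ then $\mathcal{H}_A^l=\sum a^l_{ij}p^j_i$, the composition $A\circ\ell_f^*$ corresponds to substituting $\tilde\ell_f^*(p)$ for $p$, and—crucially—the linearization along $u$ of the quadratic-in-$p$ (here linear, since $A\in\CDiffskad$) function $\mathcal{H}_A$ decomposes as
\begin{equation*}
  \tilde\ell_{\mathcal{H}_A}(\phi)=\ell_{A,\,p}(\phi)+A(\tilde D(\phi)\text{-terms}),
\end{equation*}
i.e. $\tilde\ell_{\mathcal{H}_A}$ sees both the dependence of the coefficients $a^l_{ij}$ on the jet variables (giving $\ell_{A,\,\psi}$ with $\psi=p$) and, through $p^j_i=\tilde D_x^i(p^j)$ and the total-derivative structure, the action of $A$ itself on derivatives of the argument. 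Making this split precise is the technical heart of the argument; once it is in place, the term $\tilde\ell_{\mathcal{H}_A}(\mathcal{H}_B)$ in~\eqref{eq:16} unfolds into the two pieces $-\ell_{A,\,p}(B p)$ and $-A(\tilde\ell_{B,\,p}\text{-type contributions})$ that match the first and third groups of terms in~\eqref{var-Scho2}.

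Next I would substitute $\psi_1=\psi_2=p$ is the wrong move—rather, I keep the two odd fibre variables formally distinct (or, equivalently, use the polarization built into~\eqref{var-Scho2} with $\psi_1,\psi_2\in\hat\kappa$ and recall that $\mathcal{H}_A$ is linear in $p$ so that $\tilde\ell_{\mathcal{H}_A}^*(p)$ already encodes the adjoint action). Concretely, I expand each of the three summands of~\eqref{eq:16}:
\begin{equation*}
  -\tilde\ell_{\mathcal{H}_A}(\mathcal{H}_B),\qquad
  -\tilde\ell_{\mathcal{H}_B}(\mathcal{H}_A),\qquad
  -\tfrac12\bigl(A(\tilde\ell_{\mathcal{H}_B}^*(p))+B(\tilde\ell_{\mathcal{H}_A}^*(p))\bigr),
\end{equation*}
using the split above together with the identity $\tilde\ell_{\mathcal{H}_A}^*(p)=\ell^*_{A,\,p}(p)+(\text{$A^*$-type terms})$ and the skew-adjointness of $A$ and $B$. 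Collecting the results, the six terms on the right of~\eqref{var-Scho2}, namely $-\ell_{A,\,\psi_1}(B\psi_2)+\ell_{A,\,\psi_2}(B\psi_1)-A(\ell^*_{B,\,\psi_1}(\psi_2))-\ell_{B,\,\psi_1}(A\psi_2)+\ell_{B,\,\psi_2}(A\psi_1)-B(\ell^*_{A,\,\psi_1}(\psi_2))$, should appear exactly, with the factor $1/2$ and the symmetrization in $A\leftrightarrow B$ accounting for the fact that $\{\mathcal{H}_A,\mathcal{H}_B\}$ is symmetric under $A\leftrightarrow B$ while the raw six-term formula is written asymmetrically.

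The main obstacle I anticipate is bookkeeping of signs and of the derivative-shuffling that comes from integrating by parts inside the $\mathcal{C}$-differential operators: moving a $\tilde D_x$ off the argument and onto the coefficients is precisely what converts $\tilde\ell_{\mathcal{H}_A}^*$ into a combination of $\ell^*_{A}$ and the adjoint of $A$, and here the oddness of $p$ introduces sign subtleties (the same subtlety flagged in the remark after~\eqref{eq:7}). A clean way to control this is to first prove the identity at the level of the shadows themselves—i.e. show that the extended field $\tilde\re_{\mathcal{H}_A,\bar{\mathcal{H}}_A}$ of Lemma~\ref{sec:ell-ell-coverings-1} acts on $\mathcal{H}_B$ in a way that reproduces half of $\mathcal{H}_{\ldb A,B\rdb}$ plus a term symmetric in $A,B$ that cancels against the other half coming from $\tilde\re_{\mathcal{H}_B,\bar{\mathcal{H}}_B}(\mathcal{H}_A)$—and only then pass to coordinates to pin down the constants. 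Finally I would remark that the computation for general $\Delta\in\CDiffskew_{(k)}$ asserted just before the theorem is identical in structure, the only change being that $\mathcal{H}_\Delta$ is $(k-1)$-linear in $p$ rather than linear, so the Leibniz rule produces the expected multilinear Schouten bracket.
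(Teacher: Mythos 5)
Your overall route---expanding the explicit formula~\eqref{eq:16} through the dictionary $A\mapsto\mathcal{H}_A$ and matching the result against~\eqref{var-Scho2}---is exactly the computation the paper has in mind (it offers no more detail than ``by direct computations''). However, the step you single out as the technical heart is misstated, and if executed literally it double-counts. In~\eqref{eq:16} the operator $\tilde{\ell}_{\mathcal{H}_A}$ is the linearization of $\mathcal{H}_A$ with respect to the $u$-variables only, with total derivatives lifted to $\mathcal{L}^*\mathcal{E}$; the odd variables are \emph{not} differentiated. Hence $\tilde{\ell}_{\mathcal{H}_A}(\phi)=\ell_{A,\,p}(\phi)$ exactly, with no extra ``$A(\tilde D(\phi))$'' piece, and likewise $\tilde{\ell}^*_{\mathcal{H}_A}(p)=\ell^*_{A,\,p}(p)$ with no ``$A^*$-type terms''. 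All contributions of the form $A(\cdots)$, $B(\cdots)$ enter the bracket through the $\partial/\partial p$-components $\bar{\mathcal{H}}_A$, $\bar{\mathcal{H}}_B$ of the extended symmetries of Lemma~\ref{sec:ell-ell-coverings-1}, i.e.\ they are already accounted for by the explicit last summand of~\eqref{eq:16}. So the correct term-by-term matching is: $-\tilde{\ell}_{\mathcal{H}_A}(\mathcal{H}_B)$ corresponds to the pair $-\ell_{A,\,\psi_1}(B\psi_2)+\ell_{A,\,\psi_2}(B\psi_1)$ (these two merge under the odd substitution $\psi_1=\psi_2=p$, and it is this merging---not any $A\leftrightarrow B$ symmetrization, \eqref{var-Scho2} being already symmetric in $A\leftrightarrow B$---that the factor $1/2$ compensates); $-\tilde{\ell}_{\mathcal{H}_B}(\mathcal{H}_A)$ corresponds to $-\ell_{B,\,\psi_1}(A\psi_2)+\ell_{B,\,\psi_2}(A\psi_1)$; and the half-sum corresponds to $-A(\ell^*_{B,\,\psi_1}(\psi_2))-B(\ell^*_{A,\,\psi_1}(\psi_2))$. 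If you prefer to work with the full linearization of $\mathcal{H}_A$ in all variables $(u,p)$, whose $u$-part is $\ell_{A,\,p}$ and whose $p$-part is $A$, then what you are computing is the action of the extended field $\tilde{\re}_{\mathcal{H}_B,\bar{\mathcal{H}}_B}$ on $\mathcal{H}_A$, i.e.\ you are re-deriving~\eqref{eq:16} from~\eqref{eq:7}; that is legitimate as a separate step, but then the $A(\cdots)$-terms must not be added again from the last summand of~\eqref{eq:16}.

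Two smaller corrections. Substituting $\psi_1=\psi_2=p$ is not ``the wrong move'': it is precisely how $\mathcal{H}_{\ldb A,B\rdb}$ is formed from the bilinear skew-symmetric operator $\ldb A,B\rdb$, and it is nontrivial exactly because the $p$'s are odd (keeping the arguments distinct and using $\psi_1^*\psi_2^*$ is merely the inverse, polarization, step). And in your proposed ``clean way'' there is nothing to cancel between the two halves: $\tilde{\re}_{\mathcal{H}_A,\bar{\mathcal{H}}_A}(\mathcal{H}_B)$ produces exactly the terms with $B$ outermost, $\tilde{\re}_{\mathcal{H}_B,\bar{\mathcal{H}}_B}(\mathcal{H}_A)$ those with $A$ outermost, and the two simply add up to $\mathcal{H}_{\ldb A,B\rdb}$. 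With the bookkeeping corrected as above, the verification goes through along the lines you describe and coincides with the paper's intended direct computation.
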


If $\psi_1$ and $\psi_2$ are sections of the $\ell^*_\mathcal{E}$-covering
such that $(d\psi_i)(\mathcal{C})\subset\tilde{\mathcal{C}}$, $i=1,2$, where
$\mathcal{C}$ and $\tilde{\mathcal{C}}$ are the Cartan distributions on
$\mathcal{E}$ and $\mathcal{L}^*\mathcal{E}$, respectively, then
equation~\eqref{eq:8} can be rewritten as follows
\begin{equation*}
  \psi_1^*\psi_2^*\{\mathcal{H}_A,\mathcal{H}_B\}=\ldb A,B\rdb(\psi_1,\psi_2).
\end{equation*}

We now introduce the object dual to the $\ell^*$-covering. Namely, consider
the extension of~$\mathcal{E}$ by new \emph{odd} variables~$q=(q^1,\dots,q^m)$
that satisfy the equation
\begin{equation}\label{eq:9}
  q_t=\ell_f(q).
\end{equation}
The system consisting of the initial equation and equation~\eqref{eq:9} will
be called the \emph{$\ell$-covering} of~$\mathcal{E}$ and denoted
by~$\mathcal{L}\mathcal{E}$. The extended total derivatives on
$\mathcal{L}\mathcal{E}$ are
\begin{equation*}
  \tilde D_x= D_x+\sum_{l=1}^m\sum_{k\geq 0}q^l_{k+1}
  \frac{\partial}{\partial q^l_k},\quad
  \tilde D_t=D_t+\sum_{l=1}^m\sum_{k\geq 0}
  \tilde D^k_x(\tilde\ell_f(q^l))\frac{\partial}{\partial q^l_k}.
\end{equation*}

To any $\mathcal{C}$-differential operator~$R\colon\kappa\to\kappa$ of the
form~$\sum_{i\ge0}a_{ij}^lD_x^i$ let us put into correspondence a $q$-linear
vector function~$\mathcal{N}_R=(\mathcal{N}_R^1,\dots,\mathcal{N}_R^m)$, where
\begin{equation*}
  \mathcal{N}_R^{\,l}=\sum_{i,j}a_{ij}^lq_i^j, \quad
 a_{ij}^l\in\mathcal{F}(\mathcal{E}).
\end{equation*}
Similar to Theorem~\ref{sec:ell-ell-coverings-4} we have the following

\begin{theor}[see~\cite{Ham,Bussinesq}]
  \label{sec:ell-ell-coverings-3}
  An operator~$R$ satisfies relation~\eqref{eq:4}\textup{,} i.e\textup{,} is a
  recursion operator for the equation~$\mathcal{E}$ iff
  \begin{equation}
    \label{eq:10}
    \tilde{\ell}_{\mathcal{E}}(\mathcal{N}_R)=0,
  \end{equation}
  i.e.\textup{,}~$\mathcal{N}_R$ is a shadow of a symmetry in the
  $\ell$-covering.
\end{theor}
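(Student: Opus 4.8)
The plan is to mimic the proof of Theorem~\ref{sec:ell-ell-coverings-4} (the $\ell^*$-covering version), with the roles of $\hat\kappa$ and $\kappa$ interchanged. Recall that on the $\ell$-covering $\mathcal{L}\mathcal{E}$ the extended total derivative is $\tilde D_t = D_t + \sum_{l,k}\tilde D_x^k(\tilde\ell_f(q^l))\,\partial/\partial q^l_k$, so that the component-wise evolution of the odd variables is $q_t = \ell_f(q)$. First I would write out $\tilde\ell_{\mathcal{E}}(\mathcal{N}_R)$ explicitly. Since $\tilde\ell_{\mathcal{E}} = \tilde D_t - \tilde\ell_f$ (the lift of $\ell_{\mathcal{E}} = D_t - \ell_f$ obtained by superscribing tildes), and $\mathcal{N}_R^l = \sum_{i,j} a^l_{ij} q^j_i$ is $q$-linear, applying $\tilde D_t$ produces two kinds of terms: those where $\tilde D_t$ hits the coefficients $a^l_{ij}\in\mathcal{F}(\mathcal{E})$, giving $D_t(a^l_{ij})$ since the coefficients do not depend on the $q$'s, and those where it hits the $q^j_i = \tilde D_x^i(q^j)$, giving $\tilde D_x^i(\tilde\ell_f(q^j))$ because $\tilde D_t$ commutes with $\tilde D_x$ and $\tilde D_t(q^j) = \tilde\ell_f(q^j)$.

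The key computation is to recognize these two groups of terms as an operator identity. The first group assembles into the operator whose coefficients are $D_t(a^l_{ij})$, which one checks equals $D_t\circ R - R\circ D_t$ acting on $q$ viewed through the correspondence $R\leftrightarrow\mathcal{N}_R$; more precisely, if $R = \|\sum_i a^l_{ij}D_x^i\|$ then the operator with coefficients $D_t(a^l_{ij})$ is exactly $[D_t, R]$ as a $\mathcal{C}$-differential operator on $\mathcal{E}$, applied formally to $q$. The second group, $\sum_{i,j} a^l_{ij}\tilde D_x^i(\tilde\ell_f(q^j))$, is precisely $\tilde R(\tilde\ell_f(q))$, i.e. the lift of $R\circ\ell_f$ applied to $q$. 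The subtracted term $\tilde\ell_f(\mathcal{N}_R)$ is the lift of $\ell_f\circ R$ applied to $q$. Collecting everything, $\tilde\ell_{\mathcal{E}}(\mathcal{N}_R) = \widetilde{([D_t,R] + R\circ\ell_f - \ell_f\circ R)}(q)$. Now $[D_t,R]$ computed on a module of variational vectors: using $L_{D_t} = \re_f + \partial/\partial t - \ell_f$ and the fact that for a $\mathcal{C}$-differential operator $O$ one has $L_{D_t}\circ O - O\circ L_{D_t} = (\partial/\partial t + \re_f)(O) = D_t(O)$ in the sense of differentiating coefficients, so that $D_t\circ R - R\circ D_t$ restricted correctly gives $\ell_F\circ R - R\circ\ell_F$ after accounting for the $\ell_f$ pieces — this is exactly the content already recorded in the Proposition preceding equation~\eqref{eq:4}, where invariance $L_{D_t}\circ R = R\circ L_{D_t}$ was shown equivalent to $\ell_F\circ R - R\circ\ell_F = 0$. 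Therefore $\tilde\ell_{\mathcal{E}}(\mathcal{N}_R)$ is the lift to $\mathcal{L}\mathcal{E}$ of $(\ell_F\circ R - R\circ\ell_F)$ applied to $q$.

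The final step is the equivalence: the lift map $O\mapsto\tilde O$ is injective (superscribing tildes loses no information, since one recovers $O$ by restricting to $\mathcal{F}(\mathcal{E})$), and the map $O\mapsto \tilde O(q)$ from $\mathcal{C}$-differential operators $\kappa\to\kappa$ to $q$-linear vector functions is injective because $q$ is a generic odd variable — a $\mathcal{C}$-differential operator is determined by its symbol, and the coefficients $a^l_{ij}$ can be read off from the coefficient of $q^j_i$ in $\tilde O(q)$. Hence $\tilde\ell_{\mathcal{E}}(\mathcal{N}_R) = 0$ if and only if $\ell_F\circ R - R\circ\ell_F = 0$, which by the Proposition is exactly relation~\eqref{eq:4}, i.e. $R$ is invariant, i.e. a recursion operator for $\mathcal{E}$. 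This also gives the ``shadow'' interpretation directly from the definition of a shadow of a symmetry in a covering (a solution of the lifted linearization in the covering algebra).

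I expect the main obstacle to be the careful bookkeeping in identifying the ``coefficient-differentiation'' terms $D_t(a^l_{ij})$ with the commutator $\ell_F\circ R - R\circ\ell_F$ acting formally on $q$ — one must be precise about what ``applying a $\mathcal{C}$-differential operator to the odd variable $q$'' means and why the $\partial/\partial t$, $\re_f$, and $\ell_f$ contributions recombine exactly as in the already-proved Proposition. Once that dictionary between operators on $\mathcal{E}$ and $q$-linear functions on $\mathcal{L}\mathcal{E}$ is set up cleanly (it is the exact mirror of the $p$-linear dictionary used for Theorem~\ref{sec:ell-ell-coverings-4}), the rest is a short and mechanical verification, which is why the authors say ``Similar to Theorem~\ref{sec:ell-ell-coverings-4}''.
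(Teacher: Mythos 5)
Your argument is correct and is exactly the direct computation behind this theorem: the paper itself gives no proof (it cites~\cite{Ham,Bussinesq} and the analogy with Theorem~\ref{sec:ell-ell-coverings-4}), and the cited proof is precisely your dictionary $O\mapsto\tilde O(q)$ together with the identity $\tilde{\ell}_{\mathcal{E}}(\mathcal{N}_R)=\mathcal{N}_{\ell_F\circ R-R\circ\ell_F}$ and the injectivity of the correspondence between $\mathcal{C}$-differential operators and $q$-linear functions. The only cosmetic remark is that the detour through $L_{D_t}$ and the invariance Proposition is not needed for the identity itself, since $\ell_F=D_t-\ell_f$ makes $[D_t,R]+R\circ\ell_f-\ell_f\circ R=\ell_F\circ R-R\circ\ell_F$ immediate.
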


The corresponding derivation is of the form
\begin{equation*}
  \tilde{\re}_{\mathcal{N}_R}=\sum_{k\geq0}\tilde D_x^k(\mathcal{N}_R^j)\pd{}{u_k^j}.
\end{equation*}
In parallel to Lemma~\ref{sec:ell-ell-coverings-1} we have the following
auxiliary result.

\begin{lemma}
  \label{sec:ell-ell-coverings-5}
  Let~$R\in\CDiff(\kappa,\kappa)$ and~$\tilde{\re}_{\mathcal{N}_R}$ be the
  corresponding shadow. Then there exists a symmetry of the
  equation~$\mathcal{L}\mathcal{E}$ such that its restriction
  to~$\mathcal{F}(\mathcal{E})$ coincides with~$\tilde{\re}_{\mathcal{N}_R}$.
\end{lemma}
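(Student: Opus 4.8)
The plan is to mimic the construction used in the proof of Lemma~\ref{sec:ell-ell-coverings-1} for the $\ell^*$-covering, adapting it to the $\ell$-covering $\mathcal{L}\mathcal{E}$ with its odd fibre variables $q$. A symmetry of $\mathcal{L}\mathcal{E}$ restricting to $\tilde{\re}_{\mathcal{N}_R}$ on $\mathcal{F}(\mathcal{E})$ must have the form
\begin{equation*}
  \tilde{\re}_{\mathcal{N}_R,\bar{\mathcal{N}}_R}=\tilde{\re}_{\mathcal{N}_R}+
  \sum_{k,l}\tilde{D}_x^k(\beta^l)\pd{}{q_k^l}
\end{equation*}
for some $q$-linear vector function $\beta=(\beta^1,\dots,\beta^m)$ to be determined. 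First I would write down the symmetry condition: the field must preserve the Cartan distribution on $\mathcal{L}\mathcal{E}$, which (since the $u$-part is already a shadow by Theorem~\ref{sec:ell-ell-coverings-3}) reduces to the single equation on $\beta$ coming from compatibility with $\tilde D_t = D_t + \tilde D_x^k(\tilde\ell_f(q^l))\,\partial/\partial q^l_k$, namely $\tilde{\re}_{\mathcal{N}_R,\bar{\mathcal{N}}_R}$ must commute with $\tilde D_x$ and $\tilde D_t$. The $\tilde D_x$-part is automatic from the $\tilde D_x^k(\beta^l)$ prolongation shape; the $\tilde D_t$-part yields the equation $\tilde\ell_{\mathcal{N}_R}(\tilde\ell_f(q)) - \tilde\ell_f(\beta) = \dots$, i.e. essentially $\tilde\ell_F$ applied to $\beta$ must equal a prescribed expression built from $\mathcal{N}_R$ and the recursion relation~\eqref{eq:4}.

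The key step is to guess the right $\beta$. Given the symmetry $\beta = -\tfrac12\tilde\ell^*_{\mathcal{H}_A}(p)$ used in Lemma~\ref{sec:ell-ell-coverings-1}, and the fact that in the $\ell$-covering the natural contraction producing a $q$-linear expression from $R$ is $\tilde\ell_{\mathcal{N}_R}$ acting on $q$ itself (not its adjoint, since $R$ is not skew-adjoint here), I expect the answer to be
\begin{equation*}
  \beta = \tilde\ell_{\mathcal{N}_R}(q) \quad\text{or a multiple thereof,}
\end{equation*}
where one reads off $\tilde\ell_{\mathcal{N}_R}$ as the $\mathcal{C}$-differential operator $\|\sum_i a^l_{ij}\tilde D_x^i\|$ acting on a $q$-linear argument. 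One then substitutes this $\beta$ into the symmetry equation and verifies it using relation~\eqref{eq:4}, $\ell_F\circ R = R\circ\ell_F$, lifted to $\mathcal{L}\mathcal{E}$ by tildes. Concretely, $\tilde D_t$ applied to $\mathcal{N}_R$ via the $q$-evolution~\eqref{eq:9} produces $\tilde\ell_R(\tilde\ell_f(q)) - (\text{terms})$, and the recursion identity is exactly what makes the obstruction cancel.

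The main obstacle I anticipate is bookkeeping of signs and the odd-variable conventions: because $q$ is odd, Leibniz rules pick up signs, and the operator $\tilde\ell_{\mathcal{N}_R}$ applied to an odd argument must be handled carefully to land in the correct parity. A secondary subtlety is that $R$ need not be skew-adjoint (unlike the Hamiltonian $A$ in Lemma~\ref{sec:ell-ell-coverings-1}), so there is no symmetrisation/halving forced by self-adjointness; the correct coefficient in $\beta$ has to be fixed purely by making the $\tilde D_t$-commutator vanish, and I would pin it down by comparing leading-order terms in the total-derivative expansion. Once $\beta$ is correctly identified, the verification that $\tilde{\re}_{\mathcal{N}_R,\bar{\mathcal{N}}_R}$ is a genuine symmetry is a direct computation of the type the authors repeatedly relegate to ``it is easily checked''.
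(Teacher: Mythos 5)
Your overall plan is indeed the paper's: prolong the shadow by a term $\sum_{k,j}\tilde{D}_x^k(\beta^j)\,\partial/\partial q^j_k$ and verify compatibility with $\tilde D_t$ by a direct computation. The gap is in the identification of $\beta$, which is the whole content of the lemma. The paper takes
\begin{equation*}
  \beta^j=\sum_{k,l}\pd{\mathcal{N}_R^j}{u_k^l}\,q_k^l,
\end{equation*}
i.e.\ $\beta=\tilde{\ell}_{\mathcal{N}_R}(q)$ with $\tilde{\ell}_{\mathcal{N}_R}$ the linearization of the vector function $\mathcal{N}_R$ with respect to the jet variables $u^l_k$; its coefficients are the $u$-derivatives of the coefficients $a^l_{ij}$ of $R$, so $\beta$ is \emph{quadratic} in the odd variables $q$ (in particular even, which keeps the prolonged field parity-homogeneous alongside the odd shadow part). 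You write the same symbol $\tilde{\ell}_{\mathcal{N}_R}(q)$, but you explicitly read $\tilde{\ell}_{\mathcal{N}_R}$ off as the operator $\Vert\sum_i a^l_{ij}\tilde D_x^i\Vert$; that is the derivative of $\mathcal{N}_R$ in the $q$-directions, i.e.\ the lift of $R$ itself, so under your reading $\beta$ is a constant multiple of $\mathcal{N}_R$. This cannot work: since the $q$-part of $\mathcal{L}\mathcal{E}$ is $q_t=\ell_f(q)$, the condition on $\beta$ is $\tilde{\ell}_{\mathcal{E}}(\beta)=\tilde{\re}_{\mathcal{N}_R}(\ell_f)(q)$ (the shadow applied to the coefficients of $\ell_f$, contracted with $q$); for $\beta=c\,\mathcal{N}_R$ the left-hand side vanishes because $\mathcal{N}_R$ is a shadow, while the right-hand side is nonzero for any genuinely nonlinear $f$. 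So the defect is not a normalization constant to be pinned down by comparing leading terms --- no multiple of your candidate satisfies the equation --- but the structure of $\beta$: it must be built from the $u$-derivatives of $R$'s coefficients, exactly as $\alpha=-\tfrac12\tilde{\ell}^*_{\mathcal{H}_A}(p)$ in Lemma~\ref{sec:ell-ell-coverings-1} is built from the $u$-linearization of $\mathcal{H}_A$ (the adjoint and the factor $\tfrac12$ there reflect skew-adjointness of $A$ and are simply absent here). With the correct $\beta$ the verification is the routine check you describe, using only $\tilde{\ell}_{\mathcal{E}}(\mathcal{N}_R)=0$.
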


\begin{proof}
  Consider the vector function~$\beta=(\beta^1,\dots,\beta^m)$ with
  \begin{equation*}
    \beta^j=\sum_{k,l}\pd{\mathcal{N}_R^j}{u_k^l}q_k^l
  \end{equation*}
  and set
  \begin{equation}
    \label{eq:11}
    \tilde{\re}_{\mathcal{N}_R,\bar{\mathcal{N}}_R}=\tilde{\re}_{\mathcal{N}_R}+
    \sum_{k,j}\tilde{D}_x^k(\beta^j)\pd{}{q_k^j}.
  \end{equation}
  This is the symmetry we are looking for.
\end{proof}

Using Lemma~\ref{sec:ell-ell-coverings-5} we define the Jacobi bracket of
vector functions~$\mathcal{N}_R$ and~$\mathcal{N}_S$ by
\begin{equation}
  \label{eq:12}
  \{\mathcal{N}_R,\mathcal{N}_S\}=\tilde{\re}_{\mathcal{N}_R,\bar{\mathcal{N}}_R}(\mathcal{N}_S)
  +\tilde{\re}_{\mathcal{N}_S,\bar{\mathcal{N}}_S}(\mathcal{N}_R).
\end{equation}
In explicit terms the Jacobi bracket has the form
\begin{equation}\label{eq:17}
  \{\mathcal{N}_R,\mathcal{N}_S\}=-\tilde{\ell}_{\mathcal{N}_R}(\mathcal{N}_S)
  -\tilde{\ell}_{\mathcal{N}_S}(\mathcal{N}_R)+R(\tilde{\ell}_{\mathcal{N}_S}(q))
  +S(\tilde{\ell}_{\mathcal{N}_R}(q)).
\end{equation}

Denote by $\mathcal{N}_{[R,S]_{FN}}$ the function on $\mathcal{L}\mathcal{E}$
bilinear with respect to the variables~$q$ and corresponding to the
operator~$[R,S]_{FN}\in\CDiffskew_{(2)}(\kappa,\kappa)$. Then by the direct
computations we obtain

\begin{theor}
  Let $\mathcal{N}_R$ and~$\mathcal{N}_S$ be shadows of symmetries in the
  $\ell$-covering to which there correspond operators~$R$
  and~$S\in\CDiff(\kappa,\kappa)$. Then
  \begin{equation*}
    \{\mathcal{N}_R,\mathcal{N}_S\}=\mathcal{N}_{[R,S]_{FN}}.
  \end{equation*}
\end{theor}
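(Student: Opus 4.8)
The plan is to mimic exactly the proof strategy used for Theorem~\ref{sec:ell-ell-coverings-2}, since the $\ell$-covering and the functions $\mathcal{N}_R$ are the precise duals of the $\ell^*$-covering and the functions $\mathcal{H}_A$. By Theorem~\ref{sec:ell-ell-coverings-3}, the fact that $R$ and $S$ are recursion operators means $\mathcal{N}_R$ and $\mathcal{N}_S$ are shadows of symmetries in the $\ell$-covering, and by Lemma~\ref{sec:ell-ell-coverings-5} these shadows lift to genuine symmetries $\tilde{\re}_{\mathcal{N}_R,\bar{\mathcal{N}}_R}$ and $\tilde{\re}_{\mathcal{N}_S,\bar{\mathcal{N}}_S}$ of $\mathcal{L}\mathcal{E}$. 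The commutator of these two symmetries is again a symmetry; its restriction to $\mathcal{F}(\mathcal{E})$ is the shadow whose generating section is $\{\mathcal{N}_R,\mathcal{N}_S\}$ as defined by~\eqref{eq:12}. So the whole claim reduces to the identity of $q$-bilinear functions
\begin{equation*}
  \{\mathcal{N}_R,\mathcal{N}_S\}=\mathcal{N}_{[R,S]_{FN}},
\end{equation*}
which is a statement purely about $\mathcal{C}$-differential operators on $\mathcal{E}$.

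To establish that identity I would work in local coordinates, using the explicit form~\eqref{eq:17} of the Jacobi bracket on the left and the explicit form~\eqref{FN1}--\eqref{FN1}' (the second displayed formula for $[R,S]_{FN}$ in terms of linearizations) on the right. The key translation dictionary is: for an operator $R=\sum a^l_{ij}D_x^i$ one has $\mathcal{N}_R^l=\sum a^l_{ij}q^j_i$, and the total linearization $\tilde{\ell}_{\mathcal{N}_R}$ acting on a $q$-linear argument splits into a piece coming from differentiating the coefficients $a^l_{ij}$ (this produces terms of the shape $\ell_{R,\,(\cdot)}$, i.e. $\re_{(\cdot)}(R)$ applied to the appropriate argument) and a piece coming from differentiating the $q$'s themselves (this produces $R$ applied to a linearization). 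Carrying out this bookkeeping, the two ``pure'' terms $-\tilde{\ell}_{\mathcal{N}_R}(\mathcal{N}_S)-\tilde{\ell}_{\mathcal{N}_S}(\mathcal{N}_R)$ unfold into the terms $-\ell_{R,\phi_1}(S\phi_2)-\ell_{S,\phi_1}(R\phi_2)+\ell_{R,\phi_2}(S\phi_1)+\ell_{S,\phi_2}(R\phi_1)$ together with mixed $R(\ell_{S,\cdot}(\cdot))$ and $S(\ell_{R,\cdot}(\cdot))$ contributions, while the correction terms $R(\tilde{\ell}_{\mathcal{N}_S}(q))+S(\tilde{\ell}_{\mathcal{N}_R}(q))$ supply exactly the remaining $R(\ell_{S,\phi_1}(\phi_2)-\ell_{S,\phi_2}(\phi_1))+S(\ell_{R,\phi_1}(\phi_2)-\ell_{R,\phi_2}(\phi_1))$ needed to complete the second formula for $[R,S]_{FN}$. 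The antisymmetrization in $\phi_1,\phi_2$ (equivalently, the extraction of the $q$-bilinear part) is what pins down the signs.

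The main obstacle is purely organisational rather than conceptual: one must be careful that $\tilde{\ell}_{\mathcal{N}_R}(q)$ is interpreted correctly — here $q$ plays the role of a generic argument, so $\tilde{\ell}_{\mathcal{N}_R}(q)$ is the part of the full linearization hitting the coefficients, i.e. it equals $\ell_{R,\,q}(\cdot)$ evaluated appropriately — and that the odd (Grassmann) nature of the variables $q$ is tracked through every sign, exactly as flagged in the remark after~\eqref{eq:7}. Once the dictionary between $\tilde{\ell}_{\mathcal{N}}$-terms and $\ell_{R,\cdot}$-terms is written down carefully, matching~\eqref{eq:17} against the second expression for $[R,S]_{FN}$ is a term-by-term verification. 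Hence I would simply state that ``by direct computation in local coordinates, comparing~\eqref{eq:17} with the linearization form of the Fr\"{o}licher--Nijenhuis bracket, the $q$-bilinear function $\{\mathcal{N}_R,\mathcal{N}_S\}$ coincides with $\mathcal{N}_{[R,S]_{FN}}$,'' which together with Lemma~\ref{sec:ell-ell-coverings-5} yields the theorem.
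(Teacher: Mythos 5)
Your proposal is correct and follows essentially the same route as the paper, which proves this statement only by the words ``by the direct computations'': one compares the explicit bracket formula~\eqref{eq:17} term by term with the linearization form of $[R,S]_{FN}$, using Lemma~\ref{sec:ell-ell-coverings-5} and the dictionary between coefficient-differentiation (giving $\ell_{R,\cdot}$-, $\ell_{S,\cdot}$-terms) and differentiation of the odd variables (giving the $R(\cdot)$, $S(\cdot)$ corrections). Only a minor bookkeeping remark: with $\tilde{\ell}_{\mathcal{N}}$ understood as the linearization in the $u$-variables only (the reading consistent with~\eqref{eq:12} and with $\beta$ in Lemma~\ref{sec:ell-ell-coverings-5}), the two ``pure'' terms yield exactly the four terms $\pm\ell_{R,\phi_a}(S\phi_b)$, $\pm\ell_{S,\phi_a}(R\phi_b)$ and nothing else, while \emph{all} mixed terms $R(\ell_{S,\cdot}(\cdot))$, $S(\ell_{R,\cdot}(\cdot))$ come from $R(\tilde{\ell}_{\mathcal{N}_S}(q))+S(\tilde{\ell}_{\mathcal{N}_R}(q))$ --- a shift of attribution that does not affect your argument.
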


If~$\phi_1$, $\phi_2$ are sections of the $\ell$-covering preserving the
Cartan distributions then
\begin{equation*}
  \phi_1^*\phi_2^*\{\mathcal{N}_R,\mathcal{N}_S\}=[R,S]_{FN}(\phi_1,\phi_2).
\end{equation*}

\subsection{Compatibility condition} \label{sec:compatibility}

We shall now express the compatibility condition of a Hamiltonian
structure~$A$ and a recursion operator~$R$ on equation~\eqref{eq:ev} in
similar geometric terms. To this end, recall that
both~$\mathcal{L}\mathcal{E}$ and~$\mathcal{L}^*\mathcal{E}$ are fibered over
the equation~$\mathcal{E}$ and denote the corresponding fiber bundles
by~$\tau\colon\mathcal{L}\mathcal{E}\to\mathcal{E}$
and~$\tau^*\colon\mathcal{L}\mathcal{E}\to\mathcal{E}$, respectively. Consider
the Whitney product $\tau\otimes\tau^*\colon
\mathcal{L}\mathcal{E}\times_{\mathcal{E}}\mathcal{L}^*\mathcal{E}
\to\mathcal{E}$ of these bundles. One can extend the total derivatives to the
space~$\mathcal{L}\mathcal{E}\times_{\mathcal{E}}\mathcal{L}^*\mathcal{E}$ by
setting
\begin{align*}
  \tilde D_x&= D_x+\sum_{l=1}^m\sum_{k\geq 0}\left(q^l_{k+1}\frac{\partial}{\partial q^l_k}
    +p^{\,l}_{k+1}\frac{\partial}{\partial p^{\,l}_k}\right),\\
  \tilde D_t&=D_t+\sum_{l=1}^m\sum_{k\geq 0}\left(\tilde D^k_x(\tilde\ell_f(q^l))\frac{\partial}{\partial q^l_k}
 -\tilde D^k_x(\tilde\ell^*_f(p^{\,l}))\frac{\partial}{\partial p^{\,l}_k}\right).
\end{align*}
Thus the
equation~$\mathcal{L}\mathcal{E}\times_{\mathcal{E}}\mathcal{L}^*\mathcal{E}$
amounts to~$\mathcal{E}$ extended both by~\eqref{eq:5} and~\eqref{eq:9}.

Due to the natural
projections~$\mathcal{L}\mathcal{E}\times_{\mathcal{E}}\mathcal{L}^*\mathcal{E}\to\mathcal{L}^*\mathcal{E}$
and~$\mathcal{L}\mathcal{E}\times_{\mathcal{E}}\mathcal{L}^*\mathcal{E}\to\mathcal{L}\mathcal{E}$,
the vector fields~$\tilde{\re}_{\mathcal{H}_A,\bar{\mathcal{H}}_A}$
and~$\tilde{\re}_{\mathcal{N}_R,\bar{\mathcal{N}}_R}$ (see
equalities~\eqref{eq:6} and~\eqref{eq:11}) may be considered as derivations
from~$\mathcal{F}(\mathcal{L}^*\mathcal{E})$
and~$\mathcal{F}(\mathcal{L}\mathcal{E})$
to~$\mathcal{F}(\mathcal{L}\mathcal{E}\times_{\mathcal{E}}\mathcal{L}^*\mathcal{E})$,
respectively.

\begin{lemma}
  \label{sec:comp-cond}
  There exist
  symmetries~$\tilde{\re}_{\mathcal{H}_A,\bar{\mathcal{H}}_A,\alpha}$
  and~$\tilde{\re}_{\mathcal{N}_R,\bar{\mathcal{N}}_R,\rho}$ of
  equation~$\mathcal{L}\mathcal{E}\times_{\mathcal{E}}\mathcal{L}^*\mathcal{E}$
  such that their restrictions to the function
  algebras~$\mathcal{F}(\mathcal{L}^*\mathcal{E})$
  and~$\mathcal{F}(\mathcal{L}\mathcal{E})$ coincide
  with~$\tilde{\re}_{\mathcal{H}_A,\bar{\mathcal{H}}_A}$
  and~$\tilde{\re}_{\mathcal{N}_R,\bar{\mathcal{N}}_R}$\textup{,} respectively.
\end{lemma}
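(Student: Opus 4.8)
The plan is to lift each of the two symmetries in two stages, following exactly the pattern already established in Lemmas~\ref{sec:ell-ell-coverings-1} and~\ref{sec:ell-ell-coverings-5}. Recall that $\mathcal{L}\mathcal{E}\times_{\mathcal{E}}\mathcal{L}^*\mathcal{E}$ is $\mathcal{E}$ extended by both the odd variables $q$ (satisfying $q_t=\ell_f(q)$) and the odd variables $p$ (satisfying $p_t=-\ell_f^*(p)$). For the first field, $\tilde{\re}_{\mathcal{H}_A,\bar{\mathcal{H}}_A}$ already acts as a symmetry on $\mathcal{F}(\mathcal{L}^*\mathcal{E})$, i.e.\ it knows how to differentiate the $u$'s and the $p$'s; what remains is to prescribe its action $\sum_{k,j}\tilde D_x^k(\alpha^j)\,\partial/\partial q_k^j$ on the $q$-tower. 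The natural candidate for the generating vector $\alpha=(\alpha^1,\dots,\alpha^m)$ is the one that makes the $q$-component transform in the linearised way, namely $\alpha=\tilde\ell_{\mathcal{H}_A}(q)$ (the Lie derivative of $\mathcal{H}_A$ along the $q$-direction), so that the resulting field $\tilde{\re}_{\mathcal{H}_A,\bar{\mathcal{H}}_A,\alpha}$ preserves the Cartan distribution generated by $\tilde D_x,\tilde D_t$ on the product equation. Dually, for $\tilde{\re}_{\mathcal{N}_R,\bar{\mathcal{N}}_R}$, which already acts on $\mathcal{F}(\mathcal{L}\mathcal{E})$ (the $u$'s and the $q$'s), one must prescribe a $p$-component $\sum_{k,j}\tilde D_x^k(\rho^j)\,\partial/\partial p_k^j$; here the adjoint/skew-adjoint bookkeeping dictates $\rho=-\tilde\ell_{\mathcal{N}_R}^*(p)$, in analogy with the $\alpha=-\tfrac12\tilde\ell_{\mathcal{H}_A}^*(p)$ of Lemma~\ref{sec:ell-ell-coverings-1}, possibly up to the analogous normalising factor.

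First I would recall the general criterion: a $\pi_\infty$-vertical field on an extended equation is a symmetry iff its generating section $\Psi$ satisfies the linearised equation $\tilde\ell(\Psi)=0$, where $\tilde\ell$ is the linearisation of the full system. For $\mathcal{L}\mathcal{E}\times_{\mathcal{E}}\mathcal{L}^*\mathcal{E}$ the linearisation is block-triangular over $\mathcal{E}$: the $u$-block reproduces $\tilde\ell_{\mathcal{E}}$, the $q$-block couples to the $u$-block through $\ell_{f},*}$-type terms, and the $p$-block through $\ell_f^*$-type terms. So the symmetry equations decouple into: (a) the $u$-component must be a shadow in the product covering, which for $\mathcal{H}_A$ is guaranteed by Theorem~\ref{sec:ell-ell-coverings-4} and for $\mathcal{N}_R$ by Theorem~\ref{sec:ell-ell-coverings-3}; (b) the $p$- or $q$-component must satisfy an inhomogeneous linear equation whose source is built from the already-known $u$-component. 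For $\tilde{\re}_{\mathcal{H}_A,\bar{\mathcal{H}}_A}$ the $p$-component $-\tfrac12\tilde\ell_{\mathcal{H}_A}^*(p)$ solves (b) for the $p$-tower by Lemma~\ref{sec:ell-ell-coverings-1}; the only genuinely new content is solving (b) for the $q$-tower, i.e.\ verifying $\tilde D_t(\alpha)-\tilde\ell_f(\alpha)=\bigl(\re_{\tilde{\re}_{\mathcal{H}_A,\bar{\mathcal{H}}_A}}\ell_f\bigr)(q)$-type identity, which holds precisely because $A$ satisfies~\eqref{eq:2} and $\mathcal{H}_A$ is a shadow. Symmetrically for $\tilde{\re}_{\mathcal{N}_R,\bar{\mathcal{N}}_R,\rho}$, one uses that $R$ satisfies~\eqref{eq:4}.

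Concretely, the steps in order: (1) write down the candidate fields $\tilde{\re}_{\mathcal{H}_A,\bar{\mathcal{H}}_A,\alpha}$ and $\tilde{\re}_{\mathcal{N}_R,\bar{\mathcal{N}}_R,\rho}$ with $\alpha$ and $\rho$ as above; (2) note that each is $\pi_\infty$-vertical by construction; (3) check it commutes with $\tilde D_x$ — automatic for evolutionary-type fields built with $\tilde D_x^k$; (4) check it commutes with $\tilde D_t$ on each tower of variables: on the $u$-tower this is exactly the shadow condition already proved; on the tower that was already present in $\mathcal{L}^*\mathcal{E}$ (resp.\ $\mathcal{L}\mathcal{E}$) it is Lemma~\ref{sec:ell-ell-coverings-1} (resp.\ Lemma~\ref{sec:ell-ell-coverings-5}); on the newly added tower it reduces to an identity between $\mathcal{C}$-differential operators equivalent to~\eqref{eq:2} (resp.~\eqref{eq:4}). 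Step~(4) on the new tower is the main obstacle: it is the one place where a nontrivial commutator identity must be verified, and getting the sign and the factor of $\tfrac12$ right (the odd variables and the composition $\tilde\ell_{\mathcal{H}_A}^*$ both contribute signs, exactly as flagged in the Remark after~\eqref{eq:7}) requires care. Everything else is bookkeeping parallel to the two preceding lemmas. Finally, (5) by construction the restriction of $\tilde{\re}_{\mathcal{H}_A,\bar{\mathcal{H}}_A,\alpha}$ to $\mathcal{F}(\mathcal{L}^*\mathcal{E})$ forgets the $\partial/\partial q_k^j$ terms and hence coincides with $\tilde{\re}_{\mathcal{H}_A,\bar{\mathcal{H}}_A}$, and dually for the other field, which is the assertion of the lemma.
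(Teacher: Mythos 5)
Your construction of the first lift agrees with the paper: the paper also takes $\alpha=\tilde{\ell}_{\mathcal{H}_A}(q)$ for the $q$-component in~\eqref{eq:13}. The gap is in the second lift. The paper's $p$-component in~\eqref{eq:14} is
\begin{equation*}
  \rho=-\tilde{\ell}^*_{\mathcal{N}_R}(p)-\tilde{\re}_q(R^*)(p),
\end{equation*}
whereas you propose $\rho=-\tilde{\ell}^*_{\mathcal{N}_R}(p)$ ``possibly up to the analogous normalising factor.'' The analogy with Lemma~\ref{sec:ell-ell-coverings-1} is misleading here: the factor $\tfrac12$ in $\alpha=-\tfrac12\tilde{\ell}^*_{\mathcal{H}_A}(p)$ is tied to the skew-adjointness of $A$, while a recursion operator $R$ is not skew-adjoint, and what replaces the rescaling is a genuinely new second term. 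Concretely, the symmetry condition on the $p$-tower of $\mathcal{L}\mathcal{E}\times_{\mathcal{E}}\mathcal{L}^*\mathcal{E}$ reads $\tilde D_t(\rho)+\tilde{\ell}_f^*(\rho)+\bigl(\tilde{\re}_{\mathcal{N}_R}\ell_f^*\bigr)(p)=0$, and since the source $\bigl(\tilde{\re}_{\mathcal{N}_R}\ell_f^*\bigr)(p)$ involves the $q$-dependence of $\mathcal{N}_R$, it is not compensated by $-\tilde{\ell}^*_{\mathcal{N}_R}(p)$ alone (in which the adjoint is taken with respect to the $u$-variables only); using~\eqref{eq:4} one finds the leftover is exactly absorbed by $-\tilde{\re}_q(R^*)(p)$. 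Nor can you argue the two candidates differ by something harmless: the discrepancy $\tilde{\re}_q(R^*)(p)$ is not itself the $p$-component of a symmetry with vanishing $u$- and $q$-components (already for a zeroth-order $R=a$ the terms $\ell_f^*(\re_q(a)p)$ and $\re_q(a)\ell_f^*(p)$ fail to cancel), so your field would not preserve the Cartan distribution on the $p$-equation~\eqref{eq:5}.

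Since the entire content of this lemma is the explicit correct formula for the lifts (the verification being a routine direct computation, which both you and the paper leave to the reader), leaving $\rho$ undetermined up to an unspecified ``normalising factor'' --- and with your best guess missing the term $-\tilde{\re}_q(R^*)(p)$ --- is a real gap, not mere bookkeeping: carrying out your step~(4) on the $p$-tower with your $\rho$ would fail, and the correction term is precisely what the computation is meant to produce.
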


\begin{proof}
  Let us set
  \begin{equation}
    \label{eq:13}
    \tilde{\re}_{\mathcal{H}_A,\bar{\mathcal{H}}_A,\alpha}=
    \tilde{\re}_{\mathcal{H}_A,\bar{\mathcal{H}}_A}+\sum_{k,j}\tilde{D}_x^k(\alpha^j)\pd{}{q_k^j},
  \end{equation}
  where the vector function~$\alpha=(\alpha^1,\dots,\alpha^m)$ is defined
  by~$\alpha=\tilde{\ell}_{\mathcal{H}_A}(q)$. Let also consider the field
  \begin{equation}
    \label{eq:14}
    \tilde{\re}_{\mathcal{N}_R,\bar{\mathcal{N}}_R,\rho}=
    \tilde{\re}_{\mathcal{N}_R,\bar{\mathcal{N}}_R}+\sum_{k,j}\tilde{D}_x^k(\rho^j)\pd{}{p_k^j},
  \end{equation}
  where~$\rho=(\rho^1,\dots,\rho^m)$ is defined by
  \begin{equation*}
    \rho=-\tilde{\ell}^*_{\mathcal{N}_R}(p)-\tilde{\re}_q(R^*)(p).
  \end{equation*}
  By direct computations one can check that the fields~\eqref{eq:13}
  and~\eqref{eq:14} possess the needed properties.
\end{proof}

Using Lemma~\ref{sec:comp-cond} let us define the bracket
\begin{equation*}
  \{\mathcal{H}_A,\mathcal{N}_R\}=\tilde\re_{\mathcal{H}_A,\bar{\mathcal{H}}_A,\alpha}(\mathcal{N}_R)+
  \tilde\re_{\mathcal{N}_R,\bar{\mathcal{N}}_R,\rho}(\mathcal{H}_A),
\end{equation*}
or, in explicit terms,
\begin{equation}\label{eq:18}
  \{\mathcal{H}_A,\mathcal{N}_R\}=
  -\tilde{\ell}_{\mathcal{N}_R}(\mathcal{H}_A)-\tilde{\ell}_{\mathcal{H}_A}(\mathcal{N}_R)-
  A\left(\tilde{\ell}^*_{\mathcal{N}_R}(p)+\tilde{\re}_{q}(R^*)(p)\right)+
  R(\tilde{\ell}_{\mathcal{H}_A}(q)).
\end{equation}

Denote by $\mathcal{C}_{A,R}^*$ the bilinear with respect to the variables~$p$
and~$q$ function on
$\mathcal{L}\mathcal{E}\times_{\mathcal{E}}\mathcal{L}^*\mathcal{E}$
corresponding to the $\mathcal{C}$-differential operator
$C^*(A,R)\colon\hat\kappa\times\kappa\rightarrow\kappa$ (see
equality~\eqref{eq:15}). Then the following result holds:

\begin{theor}
  Let $\mathcal{H}_A$ be a shadow in the $\ell^*$-covering to which there
  corresponds an operator $A\in\CDiffskad(\hat\kappa,\kappa)$ and
  $\mathcal{N}_R$ be a shadow in the $\ell$-covering to which there
  corresponds an operator $R\in\CDiff(\kappa,\kappa)$. Then
  \begin{equation*}
    \{\mathcal{H}_A,\mathcal{N}_R\}=\mathcal{C}_{A,R}^*.
  \end{equation*}
\end{theor}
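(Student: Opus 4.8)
The plan is to follow the same pattern established by Theorems~\ref{sec:ell-ell-coverings-2} and the corresponding statement for the $\ell$-covering, namely to reduce the identity $\{\mathcal{H}_A,\mathcal{N}_R\}=\mathcal{C}_{A,R}^*$ to a direct coordinate computation using the explicit formula~\eqref{eq:18} for the bracket on the one side and the defining formula for $C^*(A,R)$ on the other. First I would unwind the left-hand side: by the definition of the correspondence $\Delta\mapsto\mathcal{H}_\Delta$ (resp.\ $\Delta\mapsto\mathcal{N}_\Delta$) and its bilinear analogue, a function on $\mathcal{L}\mathcal{E}\times_{\mathcal{E}}\mathcal{L}^*\mathcal{E}$ which is linear in $p$ and linear in $q$ corresponds to a $\mathcal{C}$-differential bilinear operator $\hat\kappa\times\kappa\to\kappa$; since $\{\mathcal{H}_A,\mathcal{N}_R\}$ is visibly $p$-linear and $q$-linear (each of the four terms in~\eqref{eq:18} is), it corresponds to some bilinear operator, and the task is to show that operator equals $C^*(A,R)$.

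The key steps, in order, are: (1) rewrite each of the four terms on the right of~\eqref{eq:18} in terms of the operators $A$, $R$ and their adjoints acting on $p$ and $q$, using the basic identities $\tilde\ell_{\mathcal{H}_A}(\mathcal{N}_R)\leftrightarrow$ a composition involving $\ell_{A,\psi}$ applied to $R\phi$ (after substituting $p\leftrightarrow\psi$, $q\leftrightarrow\phi$), and similarly $\tilde\ell_{\mathcal{N}_R}(\mathcal{H}_A)\leftrightarrow\ell_{R,\phi}(A\psi)$, $A(\tilde\ell^*_{\mathcal{N}_R}(p))\leftrightarrow A(\ell^*_{R,\phi}(\psi))$, $A(\tilde\re_q(R^*)(p))\leftrightarrow A(\ell_{R^*,\psi}(\phi))$ — here one must be careful that the correspondence $\mathcal{N}_R\mapsto R$ versus the adjoint $R^*$ introduces the right terms — and $R(\tilde\ell_{\mathcal{H}_A}(q))\leftrightarrow R(\ell_{A,\psi}(\phi))$; (2) collect these to obtain exactly
\begin{equation*}
  -\ell_{A,\,\psi}(R\phi)+\ell_{R,\,\phi}(A\psi)+R(\ell_{A,\,\psi}(\phi))
  +A\bigl(\ell^*_{R,\,\phi}(\psi)-\ell_{R^*,\,\psi}(\phi)\bigr),
\end{equation*}
which is precisely the displayed formula for $C^*(A,R)(\psi,\phi)$; (3) check sign conventions arising from the oddness of $p$ and $q$ (the ``plus sign'' remarks following~\eqref{eq:7} and~\eqref{eq:12}) so that the two summands in the definition of $\{\mathcal{H}_A,\mathcal{N}_R\}$ combine into~\eqref{eq:18} with the stated signs; (4) invoke the fact that a $p$-linear, $q$-linear function on $\mathcal{L}\mathcal{E}\times_{\mathcal{E}}\mathcal{L}^*\mathcal{E}$ determines its corresponding bilinear $\mathcal{C}$-differential operator uniquely, so matching the explicit expressions finishes the proof.

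The main obstacle I expect is bookkeeping of the adjoints and of the extra terms $\tilde\re_q(R^*)(p)$ in the definition of $\rho$ in Lemma~\ref{sec:comp-cond}: the operator $R$ enters the $\ell$-covering through $\mathcal{N}_R$ (which encodes $R$ itself), but the symmetry lift on the $\ell^*$-side involves $R^*$, so one has to track carefully how $\tilde\ell_{\mathcal{N}_R}$, $\tilde\ell^*_{\mathcal{N}_R}$ and $\tilde\re_q(R^*)$ translate into $\ell_{R,\phi}$, $\ell^*_{R,\phi}$, $\ell_{R^*,\psi}$ respectively, and to make sure the definition~\eqref{eq:15} of $C^*(A,R)$ via the pairing $\langle C(A,R)(\psi_1,\psi_2),\phi\rangle=\langle\psi_2,C^*(A,R)(\psi_1,\phi)\rangle$ is consistent with the direct formula. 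Once the dictionary between the covering-language objects and the operator-language objects is fixed (which is essentially the content of the explicit bracket formulas~\eqref{eq:16}, \eqref{eq:17}, \eqref{eq:18} already recorded in the paper), the identification is a term-by-term comparison and the theorem follows ``by direct computation'' exactly as asserted.
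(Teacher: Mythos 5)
Your proposal coincides with the paper's own (implicit) argument: the paper offers nothing beyond the assertion that the identity follows by direct computation from the explicit bracket formula~\eqref{eq:18} and the defining expression for $C^*(A,R)$ via~\eqref{eq:15}, which is exactly the term-by-term dictionary ($p\leftrightarrow\psi$, $q\leftrightarrow\phi$, $\tilde\ell_{\mathcal{H}_A}(\mathcal{N}_R)\leftrightarrow\ell_{A,\psi}(R\phi)$, etc.) that you describe, together with uniqueness of the bilinear operator corresponding to a $p$-, $q$-linear function. The only delicate point is the one you already flag, namely the sign bookkeeping coming from the oddness of $p$ and $q$ and the ordering convention in that correspondence.
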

From the above said we get the following

\begin{theor}
  Let a Hamiltonian operator $A\in\CDiffskad(\hat\kappa,\kappa)$ and a
  recursion operator $R\in\CDiff(\kappa,\kappa)$ define Poisson--Nijenhuis
  structure on evolution equation~$\mathcal{E}$\textup{,} while $\mathcal{H}_A$ and
  $\mathcal{N}_R$ be the corresponding shadows in the $\ell^*$- and
  $\ell$-covering over $\mathcal{E}$\textup{,} respectively. Then
  \begin{align}
    \textup{(i)}& \quad \{\mathcal{H}_A,\mathcal{H}_A\}=0,\\
    \textup{(ii)}& \quad \{\mathcal{N}_R,\mathcal{N}_R\}=0,\\
    \textup{(iii)}& \quad \{\mathcal{H}_A,\mathcal{N}_R\}=0.
  \end{align}
\end{theor}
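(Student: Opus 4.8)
The plan is to combine the bracket–formula theorems proved just above with the structural algebra of the Poisson--Nijenhuis axioms. By hypothesis $(A,R)$ is a Poisson--Nijenhuis structure on $\mathcal{E}$, so $A$ is Hamiltonian, $R$ is Nijenhuis, and the compatibility conditions~(i) (namely $R\circ A=A\circ R^*$) and~(ii) (namely $C(A,R)=0$) hold; moreover $A$ and $R$ are invariant, hence by Theorems~\ref{sec:ell-ell-coverings-4} and~\ref{sec:ell-ell-coverings-3} the vector functions $\mathcal{H}_A$ and $\mathcal{N}_R$ are shadows of symmetries in the $\ell^*$- and $\ell$-coverings, respectively. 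This is exactly the setting required to invoke the three preceding bracket theorems.

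First I would prove~(i). Since $A$ is Hamiltonian, $\ldb A,A\rdb=0$, so the operator $\ldb A,A\rdb\in\CDiffskad(\hat\kappa,\kappa)$ is the zero operator, and hence the corresponding function $\mathcal{H}_{\ldb A,A\rdb}$ on $\mathcal{L}^*\mathcal{E}$ is zero. By Theorem~\ref{sec:ell-ell-coverings-2} (equation~\eqref{eq:8} with $B=A$) we have $\{\mathcal{H}_A,\mathcal{H}_A\}=\mathcal{H}_{\ldb A,A\rdb}=0$. Dually, for~(ii) the Nijenhuis condition $[R,R]_{FN}=0$ means the operator $[R,R]_{FN}\in\CDiffskew_{(2)}(\kappa,\kappa)$ vanishes, so its associated bilinear function $\mathcal{N}_{[R,R]_{FN}}$ on $\mathcal{L}\mathcal{E}$ is zero; applying the theorem that $\{\mathcal{N}_R,\mathcal{N}_S\}=\mathcal{N}_{[R,S]_{FN}}$ with $S=R$ gives $\{\mathcal{N}_R,\mathcal{N}_R\}=\mathcal{N}_{[R,R]_{FN}}=0$.

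For~(iii) the argument is the same in spirit: the theorem immediately preceding the statement identifies the mixed bracket $\{\mathcal{H}_A,\mathcal{N}_R\}$ with the function $\mathcal{C}^*_{A,R}$ associated to the operator $C^*(A,R)\colon\hat\kappa\times\kappa\to\kappa$ defined by the pairing identity~\eqref{eq:15}. Compatibility condition~(ii) of the Poisson--Nijenhuis structure says $C(A,R)=0$ as an operator; feeding this into~\eqref{eq:15}, the defining relation $\langle C(A,R)(\psi_1,\psi_2),\phi\rangle=\langle\psi_2,C^*(A,R)(\psi_1,\phi)\rangle$ forces $C^*(A,R)=0$ by non-degeneracy of the pairing $\langle\cdot,\cdot\rangle\colon\hat\kappa\times\kappa\to\bar H^n(\pi)$ on the relevant modules. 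Hence $\mathcal{C}^*_{A,R}=0$ and therefore $\{\mathcal{H}_A,\mathcal{N}_R\}=0$.

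The only genuinely delicate point is the last one: one must make sure that $C(A,R)=0$ really does imply $C^*(A,R)=0$, i.e.\ that~\eqref{eq:15} determines $C^*(A,R)$ uniquely and that condition~(i), $R\circ A=A\circ R^*$, has been used wherever the earlier theorem's hypotheses tacitly need it (the identification $\{\mathcal{H}_A,\mathcal{N}_R\}=\mathcal{C}^*_{A,R}$ was proved ``by direct computations'' under the standing assumption that $A$ and $R$ come from a Poisson--Nijenhuis pair, so (i) is already built in). Granting the uniqueness in~\eqref{eq:15}, the whole proof reduces to the single observation that each of the three structural conditions defining a variational Poisson--Nijenhuis structure — $\ldb A,A\rdb=0$, $[R,R]_{FN}=0$, and $C(A,R)=0$ — translates, via the three bracket-identity theorems, into the vanishing of the corresponding Jacobi bracket of shadows, which is precisely statements~(i), (ii), (iii).
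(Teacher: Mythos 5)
Your proposal is correct and follows essentially the same route as the paper, which states this theorem as an immediate consequence (``from the above said'') of the three bracket identities $\{\mathcal{H}_A,\mathcal{H}_B\}=\mathcal{H}_{\ldb A,B\rdb}$, $\{\mathcal{N}_R,\mathcal{N}_S\}=\mathcal{N}_{[R,S]_{FN}}$, $\{\mathcal{H}_A,\mathcal{N}_R\}=\mathcal{C}^*_{A,R}$ combined with the defining conditions $\ldb A,A\rdb=0$, $[R,R]_{FN}=0$, $C(A,R)=0$. Your extra care about $C(A,R)=0$ forcing $C^*(A,R)=0$ via the pairing~\eqref{eq:15} is the standard uniqueness-of-adjoint argument for $\mathcal{C}$-differential operators and is exactly what the paper tacitly uses.
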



\section{Variational Poisson--Nijenhuis structures in general case}
\label{sec:gen-str}

Consider now the infinite prolongation~$\mathcal{E}\subset J^\infty(\pi)$ of a
general differential equation as a submanifold in the space~$J^\infty(\pi)$ of
infinite jets of some locally trivial bundle~$\pi\colon E\to M$. In local
coordinates~$\mathcal{E}$ is given by the system
\begin{equation*}
  F^j(x^1,\dots,x^n,u^1,\dots,u^m,\dots,u^j_\sigma,\dots)=0,\qquad
  j=1,\dots,r.
\end{equation*}
We assume that~$F=(F^1,\dots,F^r)\in P$, where~$P$ is the module of sections
of some vector bundle over~$J^\infty(\pi)$. Consider the linearization
operator~$\ell_{\mathcal{E}}\colon\kappa\to P$ and its
adjoint~$\ell_{\mathcal{E}}^*\colon\hat{P}\to\hat{\kappa}$. Similar to the
evolutionary case, we can construct~$\ell$- and $\ell^*$-coverings by
extending~$\mathcal{E}$ with~$\ell_{\mathcal{E}}(q)=0$
and~$\ell_{\mathcal{E}}^*(p)=0$.

Following the scheme exposed in Section~\ref{sec:vari-poiss-nijenh}, we are
looking for $\mathcal{C}$-differential operators such that the diagrams
\begin{equation}
  \label{eq:19}
  \textup{(i)}\quad
  \xymatrix{
    \kappa\ar[r]^{\ell_{\mathcal{E}}}\ar[d]_{R}&P\ar[d]^{\bar{R}}\\
    \kappa\ar[r]^{\ell_{\mathcal{E}}}&P\rlap{,}}\qquad\qquad\textup{(ii)}\quad
  \xymatrix{
    \hat{P}\ar[r]^{\ell_{\mathcal{E}}^*}\ar[d]_{A}&\hat{\kappa}\ar[d]^{\bar{A}}\\
    \kappa\ar[r]^{\ell_{\mathcal{E}}}&P}
\end{equation}
are commutative. Literally copying the construction of
Section~\ref{sec:vari-poiss-nijenh}, we can put into correspondence to an
operator~$R$ from the first diagram in~\eqref{eq:19} a $q$-linear
function~$\mathcal{N}_R=(\mathcal{N}_R^1,\dots,\mathcal{N}_R^m)$
on~$\mathcal{L}\mathcal{E}$, while for~$A$ from the second diagram we
construct a $p$-linear
function~$\mathcal{H}_A=(\mathcal{H}_A^1,\dots,\mathcal{H}_A^m)$
on~$\mathcal{L}^*\mathcal{E}$. We again use the general result proved
in~\cite{Ham}:

\begin{theor}
  An operator~$R$ fits diagram~\textup{(i)} in~\eqref{eq:19} iff
  \begin{equation}
    \label{eq:21}
    \tilde{\ell}_{\mathcal{E}}(\mathcal{N}_R)=0,
  \end{equation}
  while~$A$ fits diagram~\textup{(ii)} iff
  \begin{equation}\label{eq:22}
    \tilde{\ell}_{\mathcal{E}}(\mathcal{H}_A)=0.
  \end{equation}
\end{theor}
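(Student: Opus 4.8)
The plan is to reduce the statement to the general result of \cite{Ham} on $\mathcal{C}$-differential operators over coverings, in exactly the way Theorems~\ref{sec:ell-ell-coverings-4} and~\ref{sec:ell-ell-coverings-3} treated the evolutionary case. Assertions~\eqref{eq:21} and~\eqref{eq:22} are proved by one and the same argument, so I would write out~\eqref{eq:21} in detail and then indicate the purely cosmetic changes needed for~\eqref{eq:22}. The basic observation is that the new odd fibre variables $q=(q^1,\dots,q^m)$ of $\mathcal{L}\mathcal{E}$ serve as a \emph{tautological} element of $\kappa$: the coordinates $q^j_\sigma$ are algebraically independent, the extended total derivatives act by $\tilde D_i q^j_\sigma = q^j_{\sigma i}$, so for any $\mathcal{C}$-differential operator $\Delta = \|\sum_\sigma a^\sigma_{ij}D_\sigma\|$ the function $\tilde\Delta(q)$ is precisely the $q$-linear function attached to $\Delta$; in particular $\mathcal{N}_R = \tilde R(q)$, and the defining system of the covering $\mathcal{L}\mathcal{E}$ is $\tilde\ell_{\mathcal{E}}(q) = 0$.

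For the ``only if'' part, suppose diagram~\textup{(i)} in~\eqref{eq:19} commutes, that is, $\ell_{\mathcal{E}}\circ R = \bar R\circ\ell_{\mathcal{E}}$ for some $\mathcal{C}$-differential operator $\bar R\colon P\to P$. Lifting this operator identity to $\mathcal{L}\mathcal{E}$ and using that the lift of a composition is the composition of the lifts (valid because the tilded total derivatives restrict to the untilded ones on $\mathcal{F}(\mathcal{E})$), I get $\tilde\ell_{\mathcal{E}}(\mathcal{N}_R) = \tilde\ell_{\mathcal{E}}(\tilde R(q)) = \widetilde{\bar R}(\tilde\ell_{\mathcal{E}}(q))$. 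Since $\widetilde{\bar R}$ is assembled from total derivatives and $\mathcal{L}\mathcal{E}$ is an infinitely prolonged equation on which $\tilde\ell_{\mathcal{E}}(q)=0$, each $\tilde D_\sigma(\tilde\ell_{\mathcal{E}}(q))$ vanishes on $\mathcal{L}\mathcal{E}$, and hence so does the whole right-hand side; this is~\eqref{eq:21}.

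For the ``if'' part, assume $\tilde\ell_{\mathcal{E}}(\mathcal{N}_R) = 0$ on $\mathcal{L}\mathcal{E}$. This says that the $q$-linear function $\ell_{\mathcal{E}}(R(q))$, regarded on $J^\infty(\pi)$ augmented by the $q$'s, lies in the differential ideal generated by the components of $\ell_{\mathcal{E}}(q)$. As this function is $q$-linear and $\mathcal{C}$-differential in the jet variables, the general lemma of \cite{Ham} produces a $\mathcal{C}$-differential operator $\bar R\colon P\to P$ with $\ell_{\mathcal{E}}(R(q)) = \bar R(\ell_{\mathcal{E}}(q))$ as an identity in $(u,q)$; comparing the coefficients of the independent monomials in the $q^j_\sigma$ --- equivalently, reading the identity off on the tautological section --- turns it into the operator identity $\ell_{\mathcal{E}}\circ R = \bar R\circ\ell_{\mathcal{E}}$, which is the commutativity of diagram~\textup{(i)}. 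For~\eqref{eq:22} one repeats the argument word for word after replacing $\mathcal{L}\mathcal{E}$ by $\mathcal{L}^*\mathcal{E}$, the odd variables $q$ by $p$, the top arrow $\ell_{\mathcal{E}}$ of the diagram by $\ell_{\mathcal{E}}^*$, and the left column $R$, $\bar R$ by $A\colon\hat P\to\kappa$, $\bar A$; the target of $\ell_{\mathcal{E}}$ is still $P$, so nothing else changes, $\mathcal{H}_A = \tilde A(p)$, and the defining system of $\mathcal{L}^*\mathcal{E}$ is $\tilde\ell_{\mathcal{E}}^*(p) = 0$.

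The only step that is not a formal manipulation --- and the place I expect the real work to sit --- is the ``if'' direction, namely the passage from ``$\ell_{\mathcal{E}}(R(q))$ lies in the differential ideal of $\ell_{\mathcal{E}}(q)$'' to the existence of a genuine $\mathcal{C}$-differential completion $\bar R$ defined on $\mathcal{E}$; this is precisely the general statement borrowed from \cite{Ham}. Everything else --- the homomorphism property of the lift to a covering and the reduction of operator identities to identities on the tautological section --- is routine bookkeeping. Accordingly, in the write-up I would state that lemma, verify that our $\mathcal{N}_R$ and $\mathcal{H}_A$ meet its hypotheses, and let both equivalences follow.
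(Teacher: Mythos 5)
Your proposal is correct and takes essentially the same route as the paper, which offers no argument of its own beyond invoking the general result of~\cite{Ham}: the ``only if'' direction is exactly the routine lifting computation you describe, and the ``if'' direction rests on the lemma of~\cite{Ham} that a $q$-linear (resp.\ $p$-linear) function vanishing on $\mathcal{L}\mathcal{E}$ (resp.\ $\mathcal{L}^*\mathcal{E}$) has the form $\nabla(\tilde{\ell}_{\mathcal{E}}(q))$ (resp.\ $\nabla(\tilde{\ell}^*_{\mathcal{E}}(p))$) for a $\mathcal{C}$-differential operator $\nabla$, whence the operator identity is read off on the tautological section. This is the same mechanism already used in Theorems~\ref{sec:ell-ell-coverings-4} and~\ref{sec:ell-ell-coverings-3}, so your write-up simply makes the intended reduction explicit.
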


Among the operators~$A$ let us distinguish those ones that enjoy the property
similar to skew-adjointness. Namely, we shall consider operators such that
\begin{equation}
  \label{eq:20}
  A^*=-\bar{A}.
\end{equation}

\begin{rem}
  This property means that the
  operator~$(A,\bar{A})\colon\hat{P}\oplus\hat{\kappa}\to\kappa\oplus P$ is
  skew-adjoint.
\end{rem}

Note now that explicit expressions~\eqref{eq:16}, \eqref{eq:17}
and~\eqref{eq:18} for Jacobi brackets do not rely on the fact that they were
given for evolutionary equations. Using this observation we give the following

\begin{defn}
  \label{sec:vari-poiss-nijenh-1}
  Let~$\mathcal{E}\subset J^\infty(\pi)$ be a differential equation.
  \begin{enumerate}
  \item A $\mathcal{C}$-differential operator~$A\colon\hat{P}\to\kappa$ is
    called a \emph{Hamiltonian structure} on~$\mathcal{E}$ if it fits the left
    diagram~\eqref{eq:19}, equation~\eqref{eq:20} holds
    and~$\{\mathcal{H}_A,\mathcal{H}_A\}=0$, where the bracket is given
    by~\eqref{eq:16}. Two Hamiltonian structures~$A$ and~$B$ are said to be
    \emph{compatible} if~$\{\mathcal{H}_A,\mathcal{H}_B\}=0$.
  \item A $\mathcal{C}$-differential operator~$R\colon\kappa\to\kappa$ is
    called a \emph{Nijenhuis operator} for the equation~$\mathcal{E}$ if it
    fits the right diagram~\eqref{eq:19}
    and~$\{\mathcal{N}_R,\mathcal{N}_R\}=0$, where the bracket is given
    by~\eqref{eq:17}.
  \item A pair of $\mathcal{C}$-differential operators~$(A,R)$ is called a
    \emph{Poisson--Nijenhuis structure} on~$\mathcal{E}$ if~$R$ is a Nijenhuis
    operator, $A$ is a Hamiltonian structure such that~$A^*\circ
    R^*=\bar{R}\circ A^*$ and~$\{\mathcal{N}_R,\mathcal{H}_A\}=0$, where the
    bracket is given by~\eqref{eq:18}.
  \end{enumerate}
\end{defn}

\begin{rem}
  If~$(A,R)$ is a Poisson--Nijenhuis structure on~$\mathcal{E}$ then $R$ is a
  recursion operator for symmetries of~$\mathcal{E}$. Moreover, a Hamiltonian
  structure~$A$, similar to the evolutionary case, determines a Poisson
  bracket on the group of conservation laws of~$\mathcal{E}$. Namely,
  if~$\omega_1$, $\omega_2$ are conservation laws then we set
  \begin{equation*}
    \{\omega_1,\omega_2\}_A=L_{d_1^{0,n-1}\omega_1}(\omega_2),
  \end{equation*}
  where~$d_1^{0,n-1}\colon E_1^{0,n-1}\to E_1^{1,n-1}$ is the differential in
  Vinogradov's $\mathcal{C}$-spectral sequence, see~\cite{Vin-book}. For
  evolutionary equations this differential coincides with the Euler operator.
\end{rem}

The following result generalises Theorem~\ref{theor:main}:

\begin{theor}
  \label{sec:vari-poiss-nijenh-2}
  If~$(A,R)$ is a Poisson--Nijenhuis structure on~$\mathcal{E}$ then~$R^i\circ
  A$\textup{,} $i=0,1,2,\dots$\textup{,} is a family of pair-wise compatible
  Hamiltonian structures on~$\mathcal{E}$.
\end{theor}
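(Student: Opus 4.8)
The plan is to mirror the proof of Theorem~\ref{theor:main}, transferring the algebraic induction from the absolute jet space to the general equation~$\mathcal{E}$ by means of the covering dictionary established in the preceding sections. First I would recast the three identities \eqref{eq:comp-cond-1a}, \eqref{eq:comp-cond-2a}, \eqref{eq:comp-cond-3} in terms of the Jacobi brackets $\{\mathcal{H}_A,\mathcal{H}_B\}$, $\{\mathcal{N}_R,\mathcal{N}_S\}$, $\{\mathcal{H}_A,\mathcal{N}_R\}$ and the function $\mathcal{C}^*_{A,R}$ using the explicit formulas \eqref{eq:16}--\eqref{eq:18} together with Theorems~\ref{sec:ell-ell-coverings-2} and the two subsequent bracket theorems. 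Since those explicit formulas do not use the evolutionary form of the equation (as the remark before Definition~\ref{sec:vari-poiss-nijenh-1} points out), the same Schouten/Nijenhuis compatibility identities hold verbatim on~$\mathcal{L}\mathcal{E}\times_{\mathcal{E}}\mathcal{L}^*\mathcal{E}$ for a general~$\mathcal{E}$, now read as relations among functions on the coverings rather than among operators. In particular, the hypothesis that $(A,R)$ is a Poisson--Nijenhuis structure gives, by Definition~\ref{sec:vari-poiss-nijenh-1}, the three vanishing conditions $\{\mathcal{H}_A,\mathcal{H}_A\}=0$, $\{\mathcal{N}_R,\mathcal{N}_R\}=0$, $\{\mathcal{H}_A,\mathcal{N}_R\}=0$, plus the intertwining $A^*\circ R^*=\bar R\circ A^*$; the last identity is exactly what is needed to make the ``mixed'' terms of the form $A(C(B,R))$ meaningful and to run the induction.

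Next I would set up the induction on $n=\max(i,j)$ exactly as in Theorem~\ref{theor:main}. The base case $n=1$ is the analogue of the Proposition preceding Theorem~\ref{theor:main}: from the covering versions of \eqref{eq:comp-cond-1} and \eqref{eq:comp-cond-2} one deduces $\{\mathcal{H}_{RA},\mathcal{H}_{RA}\}=0$ and $\{\mathcal{H}_A,\mathcal{H}_{RA}\}=0$, i.e.\ that $R\circ A$ is a Hamiltonian structure compatible with~$A$. For the inductive step I would carry over the substitutions made in the proof of Theorem~\ref{theor:main}: plug $R^nA$ and $R^{n-l}A$ into \eqref{eq:comp-cond-2a}, then $R^{n-1}A$, $R^{n-l}A$ and $R^2$ into the same identity, use \eqref{eq:comp-cond-3} with $R^{n-1}A$ in place of $A$ to get $C(R^nA,R)+C(R^{n-1}A,R^2)=0$, and add; this yields $\ldb R^{n+1}A,R^{n-l}A\rdb=0$ for $l=1,\dots,n$, and then \eqref{eq:comp-cond-1a}, \eqref{eq:comp-cond-1} dispose of the remaining cases $\ldb R^{n+1}A,R^nA\rdb=0$ and $\ldb R^{n+1}A,R^{n+1}A\rdb=0$. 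The auxiliary fact $A(C(R^iA,R^j))=0$ for $i+j\le n$ is propagated by putting $B=R^{n-l}A$ and $R^{l+1}$ for $R$ in \eqref{eq:comp-cond-2a}, as before. Finally one translates $\ldb R^{n+1}A,R^jA\rdb=0$ back into $\{\mathcal{H}_{R^{n+1}A},\mathcal{H}_{R^jA}\}=0$, which is the assertion that $R^{n+1}A$ is Hamiltonian and compatible with every earlier member of the hierarchy.

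The point I expect to require the most care is verifying that all the intermediate operators $R^iA$, $R^iA\circ R^j$, etc., actually fit the appropriate diagrams in~\eqref{eq:19} and satisfy the skew-adjointness surrogate~\eqref{eq:20}, so that the quantities $\mathcal{H}_{R^iA}$, $\mathcal{N}_{R^i}$ and the brackets among them are even defined in the general (non-evolutionary) setting. Concretely: from $R$ fitting the left diagram with companion $\bar R$ one gets $R^i$ fitting it with companion $\bar R^i$; from $A$ fitting the right diagram with companion $\bar A=-A^*$ and from $A^*R^*=\bar RA^*$ one must check that $R^iA$ fits the right diagram with companion $\bar R^i\bar A$ and still obeys $(R^iA)^*=-\bar R^i\bar A$, i.e.\ $A^*(R^i)^*=\bar R^i A^*$, which follows by iterating the hypothesis. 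Once this bookkeeping is in place, the algebraic identities \eqref{eq:comp-cond-1a}--\eqref{eq:comp-cond-3} — whose validity for a general equation is guaranteed because the explicit bracket formulas \eqref{eq:16}--\eqref{eq:18} are equation-independent — let the induction of Theorem~\ref{theor:main} proceed word for word, and the theorem follows.
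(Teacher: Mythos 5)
Your argument is exactly the one the paper intends: Theorem~\ref{sec:vari-poiss-nijenh-2} is stated without a separate proof precisely because the bracket formulas \eqref{eq:16}--\eqref{eq:18} are equation-independent, so the identities and the induction of Theorem~\ref{theor:main} transfer verbatim to the covering picture. Your additional bookkeeping --- that $R^i$ fits diagram (i) with companion $\bar R^i$, that $R^iA$ fits the other diagram with companion $\bar R^i\bar A$, and that $A^*R^*=\bar RA^*$ iterates to give $(R^iA)^*=-\bar R^i\bar A$ so the functions $\mathcal{H}_{R^iA}$ and their brackets are well defined --- is correct and fills in exactly what the paper leaves implicit.
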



\section{Concluding remarks:  nonlocal Poisson--Nijenhuis structures}
\label{sec:nonloc-str}

Strictly speaking all constructions exposed above are valid for \emph{local}
Poisson--Nijenhuis structures. In reality, the operators~$A$ or~$R$ or both
are \emph{nonlocal}, i.e., contain terms like~$D_x^{-1}$. For example, recall
the recursion operator for the KdV equation. It seems that our approach can be
extended to structures of this type. The general scheme is as follows.

Let~$\mathcal{E}$ be a differential equation
and~$\tau\colon\widetilde{\mathcal{L}\mathcal{E}}\to\mathcal{L}\mathcal{E}$ be a
general covering over~$\mathcal{L}\mathcal{E}$ in the sense
of~\cite{Nonloc}. Then solutions of the equation
\begin{equation*}
  \tilde{\ell}_{\mathcal{E}}(\mathcal{N})=0
\end{equation*}
linear with respect to odd variables give rise to nonlocal
$\mathcal{C}$-differential recursion operators~$R_{\mathcal{N}}$ with
nonlocalities corresponding to nonlocal variables defined by~$\tau$. These
solutions are shadows of symmetries in this covering. The hardest problem lies
in definition of the Jacobi bracket for such shadows. Nontriviality of this
problem is illustrated by observation given in~\cite{Ghosts}. A way to commute
shadows can be derived from the results of~\cite{Khor'kova} but the
constructions given there are ambiguous.

In~\cite{Shad-Rec} we described a canonical construction for the Jacobi
bracket~$\{.\,,.\}$ of shadows of a general nature. Given this construction
and taking into account the above exposed results, we can define nonlocal
Nijenhuis operators~$R_{\mathcal{N}}$ as the ones satisfying
\begin{equation*}
  \{\mathcal{N},\mathcal{N}\}=0.
\end{equation*}

In a similar manner, we can consider coverings
over~$\tau^*\colon\widetilde{\mathcal{L}^*\mathcal{E}}\to\mathcal{L}^*\mathcal{E}$
and, solving the equation
\begin{equation*}
  \tilde{\ell}_{\mathcal{E}}(\mathcal{H})=0,
\end{equation*}
look for nonlocal Hamiltonian operators~$A_{\mathcal{H}}$ corresponding to
shadows~$\mathcal{H}$. The Hamiltonianity condition is given by
\begin{equation*}
  \{\mathcal{H},\mathcal{H}\}=0.
\end{equation*}

Finally, the compatibility condition for~$R_{\mathcal{N}}$ and~$A_\mathcal{H}$
are expressed by
\begin{equation*}
  \{\mathcal{H},\mathcal{N}\}=0,
\end{equation*}
where the bracket is considered in the Whitney product of~$\tau$ and~$\tau^*$.

A detailed theory of nonlocal Poisson--Nijenhuis structures will be given
elsewhere.

\begin{rem}
  As it was demonstrated in~\cite{Ham} and~\cite{Bussinesq}, a very efficient
  way to construct nonlocal recursion operators and Hamiltonian structures for
  evolution equations is the use of nonlocal vectors (for the
  $\ell^*$-covering) and covectors (for the $\ell$-covering). In particular,
  this method, by its nature, leads to the so-called \emph{weakly nonlocal
    operators}.

  When this paper was almost finished, Maria Clara Nucci indicated to us the
  work~\cite{Ibragim} where the construction of nonlocal vectors was
  reinvented. The author of~\cite{Ibragim} exploits the Lagrangian structure
  of the $\ell^*$-covering, though the reason for existence of nonlocal
  vectors is more general (it suffices to compare the construction with the
  one for nonlocal covectors on the $\ell$-covering).
\end{rem}


\end{document}